\documentclass[11pt,a4paper]{article}

\usepackage[hyperfootnotes=false,pdfpage layout=SinglePage]{hyperref}
\usepackage{mathptmx}
\usepackage[scaled=.92]{helvet}
\usepackage{amssymb}
\usepackage{amsmath}
\usepackage{amsthm}
\usepackage{latexsym}
\usepackage{amsfonts}
\usepackage{mathrsfs}
\usepackage{setspace}
\usepackage{graphicx}

\newtheorem{thm}{Theorem}[section]
\newtheorem{cor}[thm]{Corollary}
\newtheorem{lem}[thm]{Lemma}
\newtheorem{prop}[thm]{Proposition}
\theoremstyle{definition}
\newtheorem{defn}[thm]{Definition}
\newtheorem{rem}[thm]{Remark}
\newtheorem{ex}[thm]{Example}

%%%%%%%%%%%%%%%%%%%%%%%%%%%%%    GLOBAL SETTINGS    %%%%%%%%%%%%%%%%%%%%%%%%%
\numberwithin{equation}{section}
\setlength{\oddsidemargin}{1.45cm}
\setlength{\textwidth}{130 mm}
\setlength{\textheight}{195 mm}
%%%%%%%%%%%%%%%%%%%%%%%%%%%%%%%%%%%%%%%%%%%%%%%%%%%%%%%%%%%%%%%%%%%%%%%%%%%%%

%--------------------------------------------------------------------------------------------------
%
%     GENERAL DEFINITIONS
%
%--------------------------------------------------------------------------------------------------

% Things I cannot remember

% Numbers
\newcommand{\R}{\ensuremath{\mathbb R}}    % Reelle Zahlen
\newcommand{\C}{\ensuremath{\mathbb C}}    % Komplexe Zahlen
\newcommand{\N}{\ensuremath{\mathbb N}}    % Nat"urliche Zahlen
    % Ganze Zahlen
\newcommand{\K}{\ensuremath{\mathbb K}}    % Irgendein Koerper

% Inner product crap
\newcommand{\lk}{\langle}
\newcommand{\rk}{\rangle}
\newcommand{\aproduct}{\langle\cdot\,,\cdot\rangle}
%\newcommand{\llb}{\llbracket}               % Nur mit package stmaryrd.sty
%\newcommand{\rrb}{\rrbracket}               % Nur mit package stmaryrd.sty
%\newcommand{\dproduct}{\llb\cdot,\cdot\rrb} % Nur mit package stmaryrd.sty
%\newcommand{\dperp}{{\llb\perp\rrb}}        % Nur mit package stmaryrd.sty

% Letters

\newcommand{\calC}{\mathcal C}

\newcommand{\calH}{\mathcal H}

\newcommand{\calK}{\mathcal K}

\newcommand{\calO}{\mathcal O}

  % For scr-font package "mathrsfs" is needed

\newcommand{\veps}{\varepsilon}
\newcommand{\vphi}{\varphi}

% Matrices and vectors
\newcommand{\mat}[4]
{
   \begin{pmatrix}
      #1 & #2\\
      #3 & #4
   \end{pmatrix}
}
\newcommand{\vek}[2]
{
   \begin{pmatrix}
      #1\\
      #2
   \end{pmatrix}
}

% Not in italic mode

\renewcommand{\ker}{\operatorname{ker}}
\DeclareMathOperator{\ran}{ran}
\DeclareMathOperator{\dom}{dom}

% Arrows

\newcommand{\Sra}{\Rightarrow}

\newcommand{\Slra}{\Leftrightarrow}

%\newcommand{\restr}{\!\!\upharpoonright\!\!}

% Misc
\newcommand{\ol}{\overline}

\newcommand{\wt}{\widetilde}

% Only here
\renewcommand{\l}{\langle}
\renewcommand{\r}{\rangle}

\DeclareMathOperator{\diag}{diag}
\DeclareMathOperator{\tr}{tr}

%---------------------------------------------------------------------------------------------------------
%---------------------------------------------------------------------------------------------------------

\newcommand{\bitem}{\begin{itemize}}
\newcommand{\eitem}{\end{itemize}}
\newcommand{\benum}{\begin{enumerate}}
\newcommand{\eenum}{\end{enumerate}}
\newcommand{\beq}{\begin{equation}}
\newcommand{\eeq}{\end{equation}}

%---------------------------------------------------------------------------------------------------------
%---------------------------------------------------------------------------------------------------------

\begin{document}
%\thispagestyle{empty}
%%%%%%%%%%%%%%%%%%%%%%%%%%%%%%%%%%%%%%%%%%%%%%%%%%%%%%%%%%%%%%%%%%%%%%%%%%%%%
%%%
%%%  HEAD OF PAPER
%%%
\vspace*{-.3cm}
\begin{center}
\begin{spacing}{1.7}
{\LARGE\bf Scalable Frames}
\end{spacing}

%%%%%%%%%%%%%%%%%%%%%%%%%%%%      AUTHOR      %%%%%%%%%%%%%%%%%%%%%%%%%%%%%%%
\vspace{.5cm}
\begin{spacing}{1.7}
{\large Gitta Kutyniok, Kasso A.\ Okoudjou, Friedrich Philipp, and Elizabeth K.\ Tuley}
\end{spacing}
%%%%%%%%%%%%%%%%%%%%%%%%%%%%%%%%%%%%%%%%%%%%%%%%%%%%%%%%%%%%%%%%%%%%%%%%%%%%%
\end{center}

%%%%%%%%%%%%%%%%%%%%%%%%%%%%      ABSTRACT      %%%%%%%%%%%%%%%%%%%%%%%%%%%%%
\vspace{.1cm}\hrule\vspace*{.4cm}
\noindent{\bf Abstract}

\vspace*{.3cm}\noindent
Tight frames can be characterized as those frames which possess optimal numerical stability
properties. In this paper, we consider the question of modifying a general frame to generate a tight frame by
rescaling its frame vectors; a process which can also be regarded as perfect preconditioning of a frame by
a diagonal operator. A frame is called scalable, if such a diagonal operator exists. We derive various
characterizations of scalable frames, thereby including the infinite-dimensional situation. Finally, we
provide a geometric interpretation of scalability in terms of conical surfaces.
\vspace*{.4cm}\hrule
%
%\vspace{.6cm}
%{\it Keywords:} 
%
%{\it MSC 2010:}
%%%%%%%%%%%%%%%%%%%%%%%%%%%%%%%%%%%%%%%%%%%%%%%%%%%%%%%%%%%%%%%%%%%%%%%%%%%%%

\vspace*{.5cm}
\section{Introduction}

Frames have established themselves by now as a standard notion in applied mathematics, computer science, and engineering,
see \cite{ch,ck12}.
In contrast to orthonormal bases, typically frames form redundant systems, thereby allowing non-unique, but stable
decompositions and expansions. The wide range of applications of frames can be divided into two categories. One
type of applications utilize frames for decomposing data. Here typical goals are erasure-resilient transmission,
data analysis or processing, and compression -- the advantage of frames being their robustness as well as their
flexibility in design. A second type of applications requires frames for expanding data. This approach is extensively
used in sparsity methodologies such as Compressed Sensing (see \cite{ek}), but also, for instance, as systems
generating trial spaces for PDE solvers. Again, it relies on non-uniqueness of the expansion which promotes
sparse expansions and on the flexibility in design.

All such applications require the associated algorithms to be numerically stable, which the subclass of tight frames
satisfies optimally. Thus, a prominent question raised in several publications so far is the following: When can a
given frame be modified to become a tight frame? The simplest operation to imagine is
to rescale each frame vector. Therefore this question is typically phrased in the following more precise form:
When can the vectors of a given frame be rescaled to obtain a tight frame? This is the problem we shall address
in this paper.

\subsection{Tight Frames}

Let us first state the precise definition of a frame and, in particular, of tight and Parseval frames to stand on
solid ground for the subsequent discussion. Letting $\calH$ be a real or complex separable Hilbert space and letting
$J$ be a subset of $\N$, a set of vectors $\Phi = \{\vphi_j\}_{j\in J}\subset\calH$ is called a
{\it frame} for $\calH$, if there exist positive constants $A,B > 0$ (the {\em lower} and {\em upper frame bound})
such that
\begin{equation}\label{e:frame_def}
A\|x\|^2\,\le\,\sum_{j\in J}|\lk x,\vphi_j\rk|^2\,\le\,B\|x\|^2 \quad \mbox{for all } x\in\calH.
\end{equation}
A frame $\Phi$ is called {\it $A$-tight} or just {\it tight}, if $A = B$ is possible in \eqref{e:frame_def},
and {\em Parseval}, if $A = B = 1$ is possible. Moreover, if $|J| < \infty$ (which implies that $\calH = \K^N$
with $\K=\R$ or $\K=\C$), the frame $\Phi$ is called {\em finite}.

To justify the claim of numerical superiority of tight frames, let $\Phi = \{\vphi_j\}_{j\in J}\subset\calH$ be a frame for
$\calH$ and let $T_\Phi : \calH\to\ell^2(J)$ with $T_\Phi x := \big(\l x,\vphi_j\r\big)_{j\in J}$ denote the
associated {\em analysis operator}.
%Since $T_\Phi$ is an injective bounded operator with closed range, it can be regarded as a continuous embedding of $\calH$ into $\ell^2(J)$.
Its adjoint $T_\Phi^*$, the {\em synthesis operator}
of $\Phi$, maps $\ell^2(J)$ surjectively onto $\calH$. From the properties of $T_\Phi$, it follows that the {\em
frame operator} $S_\Phi := T_\Phi^*T_\Phi$ of $\Phi$, given by
\[
S_\Phi x = \sum_{j\in J}\l x,\vphi_j\r\vphi_j,\quad x\in\calH,
\]
is a bounded and strictly positive selfadjoint operator in $\calH$. These properties imply that $\Phi$ admits the
reconstruction formula
\[
x = \sum_{j\in J}\l x,\vphi_j\r S_\Phi^{-1}\vphi_j  \quad \mbox{for all } x\in\calH.
\]

Inversion requires particular numerical attention, which implies that $S_\Phi = const \cdot I_\calH$ is desirable
($I_\calH$ denoting the identity on $\calH$, for $\calH = \K^N$ we will use $I_N$).
And in fact, tight frames can be characterized as precisely those frames satisfying this
condition. Thus an $A$-tight frame admits the numerically optimally stable reconstruction given by
\[
x = A^{-1} \cdot \sum_{j\in J}\l x,\vphi_j\r \vphi_j  \quad \mbox{for all } x\in\calH.
\]

%*****************************************************************************************************************
%*****************************************************************************************************************

\subsection{Generating Parseval Frames}

This observation raises the question on how to carefully modify a given frame -- which might be suitable for a
particular application -- in order to generate a tight frame. It is immediate that this question is equivalent to
generating a Parseval frame provided we allow multiplication of each frame vector by the same value. Thus
typically one seeks to generate Parseval frames.

A very common approach is to apply $S_\Phi^{-1/2}$ to each frame vector of a frame $\Phi$, which can be easily
shown to yield a Parseval frame. This approach is though of more theoretical interest due to the repetition of
the problem to invert the frame operator. Hence, this construction is often not reasonable in practice.

The simplest imaginable variation of a frame is just scaling its frame vectors. We thus coin a frame {\it scalable},
if such a scaling leads to a Parseval frame. It should also be pointed out that the scaling of frames is related
to the notion of signed frames, weighted frames as well as controlled frames (see, e.g., \cite{pewal02,bag,raba10}).

It is evident that not every frame is scalable. For example, a basis in $\R^2$ which is not an orthogonal basis
is not scalable, since a frame with two elements in $\R^2$ is a Parseval frame if and only if it is an orthonormal
basis. As a first classification, the finite-dimensional version of Proposition \ref{p:1st_charac} shows
that a frame $\Phi$ in $\K^N$ with analysis operator $T_\Phi$
(the rows of which are the frame vectors) is scalable if and only if there exists a diagonal matrix $D$ such that
$DT_\Phi$ is isometric. Since the condition number of such a matrix equals one, the scaling question is a particular
instance of the problem of preconditioning of matrices.

%*****************************************************************************************************************
%*****************************************************************************************************************

\subsection{An Excursion to Numerical Linear Algebra}

In the numerical linear algebra community, the problem of preconditioning is well-known and extensively studied,
see, e.g., \cite{c,fk}. The problem to design preconditioners involving scaling appears in various forms in the
numerical linear algebra literature. The common approach to this problem is to minimize the condition number of
the matrix multiplied by a preconditioning matrix -- in our case of $DT_\Phi$, where $D$ runs through the set
of diagonal matrices. As shown for instance in \cite{bm}, this minimization problem can be reformulated as a
convex problem. However, as also mentioned in \cite{bm}, algorithms solving this convex problem perform slowly,
and, even worse, there exist situations in which the infimum is not attained. As additional references,
we wish to mention \cite{be,bb,c,k,s1} for preconditioning by multiplying
diagonal matrices from the left and/or the right, \cite{s2,els,e} for block diagonal scaling and \cite{r,b,vS}
for scaling in order to obtain equal-norm rows or columns.

%*****************************************************************************************************************
%*****************************************************************************************************************

\subsection{Our Contribution}

Our contribution to the scaling problem of frames is three-fold. First, these are the leadoff results on
this problem. Second, with Theorem \ref{t:main} we provide various characterizations of (strict) scalability
of a frame for a general separable Hilbert space. In this respect, a particular interesting characterization derived in
Theorem \ref{t:main} states that a frame $\Phi$ in a Hilbert space $\calH$ is strictly scalable if and only if
there exists a frame $\Psi$ in a presumably different Hilbert space $\calK$ such that the coupling of the frame vectors of
$\Phi$ and $\Psi$ in $\calH\oplus\calK$ constitutes an orthogonal basis. And, third, Theorems \ref{t:quadric} and \ref{t:geometry}
provide a geometric characterization of scalability of finite frames. More precisely, we prove that a finite frame in $\R^N$
is not scalable if and only if all its frame vectors are contained in certain cones.

%*****************************************************************************************************************
%*****************************************************************************************************************

\subsection{Outline}

This paper is organized as follows. In Section \ref{s:char} we focus on the situation of general separable
Hilbert spaces. We first analyze when a scaling preserves the frame property (Subsection \ref{subsec:frameproperties}),
followed by a general equivalent condition in terms of diagonal operators (Subsection \ref{subsec:generalequivalentcondition}).
Subsection \ref{subsec:mainresult} is devoted to the main characterization of strict scalability of
frames. In Section \ref{s:real} we then restrict to the situation of finite frames. First, in Subsection
\ref{subsec:characterization}, we derive a yet different characterization tailored specifically to the
finite-dimensional case. Finally, this result is shown to give rise to a geometric interpretation of scalable frames
in terms of quadrics (Subsection \ref{subsec:geo}).

%*****************************************************************************************************************
%*****************************************************************************************************************
%*****************************************************************************************************************

\section{Strict Scalability of General Frames}\label{s:char}

In this section, we derive our first main theorem which provides a characterization of (strictly) scalable frames.
We wish to mention that this result does not only hold for finite frames, but in the general separable Hilbert space setting.

%*****************************************************************************************************************
%*****************************************************************************************************************

\subsection{Scalability and Frame Properties}\label{subsec:frameproperties}

We start by making the notion of scalability mathematically precise. We further introduce the notions of
positive and strict scalability. Positive scalability ensures that no frame vectors are suppressed by the
preconditioning. The same is true for strict scalability, which in addition prevents numerical instabilities
caused by arbitrarily small entries in the matrix representation of the diagonal operator serving as preconditioner.

\begin{defn}
A frame $\Phi = \{\vphi_j\}_{j\in J}$ for $\calH$ is called {\em scalable} if there exist scalars $c_j\ge 0$, $j\in J$, such
that $\{c_j\vphi_j\}_{j\in J}$ is a Parseval frame. If, in addition, $c_j > 0$ for all $j\in J$, then $\Phi$ is called
{\em positively scalable}. If there exists $\delta > 0$, such that $c_j\ge\delta$ for all $j\in J$, then $\Phi$ is called
{\em strictly scalable}.
\end{defn}

Clearly, positive and strict scalability coincide for finite frames. Moreover, each scaling $\{c_j\vphi_j\}_{j\in J}$ of a finite
frame $\{\vphi_j\}_{j\in J}$ with positive scalars $c_j$ is again a frame. In the infinite-dimensional situation this might
not be the case. However, if there exist $K_1,K_2 > 0$ such that $K_1\le c_j\le K_2$ holds for all $j\in J$, then also
$\{c_j\vphi_j\}_{j\in J}$ is a frame, see \cite[Lemma 4.3]{bag}. A characterization of when a scaling preserves the frame
property can be found in Proposition \ref{p:diag_frame} below. This requires particular attention to the {\em diagonal operator}
$D_c$ in $\ell^2(J)$ corresponding to a sequence $c = (c_j)_{j\in J}\subset\K$, which is defined by
$$
D_c(v_j)_{j\in J} := \big(c_jv_j\big)_{j\in J}\,,\quad (v_j)_{j\in J}\in\dom D_c,
$$
where
$$
\dom D_c := \left\{(v_j)_{j\in J}\in\ell^2(J) : (c_jv_j)_{j\in J}\in\ell^2(J)\right\}.
$$
It is well-known that $D_c$ is a (possibly unbounded) selfadjoint operator in $\ell^2(J)$ if and only if $c_j\in\R$ for all
$j\in J$. If even $c_j\ge 0$ ($c_j > 0$, $c_j\ge\delta > 0$) for each $j\in J$, then the selfadjoint operator $D_c$ is
non-negative (positive, strictly positive, respectively).

Before we present the announced characterization, we require some notation. As usual, we denote the
domain, the kernel and the range of a linear operator $T$ by $\dom T$, $\ker T$ and $\ran T$, respectively. Also, a closed
linear operator $T$ between two Hilbert spaces $\calH$ and $\calK$ will be called {\em ICR} {\rm (}or an {\em ICR-operator}{\rm)},
if it is injective and has a closed range, i.e., if there exists $\delta > 0$ such that $\|Tx\|\ge\delta\|x\|$ for all $x\in\dom T$.
We mention that the analysis operator of a frame is always an ICR-operator.

The following result now provides a characterization of when a scaling preserves the frame property.

\begin{prop}\label{p:diag_frame}
Let $\Phi = \{\vphi_j\}_{j\in J}$ be a frame for $\calH$ with analysis operator $T_\Phi$ and let $c = (c_j)_{j\in J}$ be a
sequence of non-negative scalars. Then the following conditions are equivalent.
\bitem
\item[{\rm (i)}]  The scaled sequence of vectors $\Psi := \{c_j\vphi_j\}_{j\in J}$ is a frame for $\calH$.
\item[{\rm (ii)}] We have $\ran T_\Phi\subset\dom D_c$ and $D_c|\ran T_\Phi$ is ICR.
\eitem
Moreover, in this case, the frame operator of the frame $\Psi$ is given by
$$
S_\Psi = (D_cT_\Phi)^*(D_cT_\Phi) = \ol{T_\Phi^*D_c}D_cT_\Phi,
$$
where $\ol{T_\Phi^*D_c}$ denotes the closure of the operator $T_\Phi^*D_c$.
\end{prop}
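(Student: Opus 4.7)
The plan is to view $\Psi=\{c_j\vphi_j\}_{j\in J}$ through its would-be analysis operator: if $\Psi$ is Bessel, then necessarily $T_\Psi x=(c_j\lk x,\vphi_j\rk)_{j\in J}=D_cT_\Phi x$, so the very well-definedness of $T_\Psi$ on all of $\calH$ is equivalent to $\ran T_\Phi\subset\dom D_c$. Everything else then becomes a translation between the two frame bounds for $\Psi$ and analytic properties of $D_c|\ran T_\Phi$.

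For (i)$\Rightarrow$(ii), the Bessel bound of $\Psi$ forces $(c_j\lk x,\vphi_j\rk)_{j\in J}\in\ell^2(J)$ for every $x\in\calH$, giving $\ran T_\Phi\subset\dom D_c$; the lower bound $\|D_cT_\Phi x\|^2\ge A_\Psi\|x\|^2$, combined with $\|T_\Phi x\|\le\|T_\Phi\|\|x\|$, translates into $\|D_cv\|\ge\delta\|v\|$ for all $v\in\ran T_\Phi$, with $\delta=\sqrt{A_\Psi}/\|T_\Phi\|$. Closedness of $D_c|\ran T_\Phi$ is automatic from closedness of $D_c$ together with closedness of $\ran T_\Phi$ (since $T_\Phi$ is ICR), so $D_c|\ran T_\Phi$ is ICR. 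Conversely, for (ii)$\Rightarrow$(i), the operator $D_c|\ran T_\Phi$ is closed and defined on the entire closed subspace $\ran T_\Phi$, hence bounded by the closed graph theorem; together with the ICR lower estimate this yields both frame bounds for $\Psi$ via
\[
\delta^2A_\Phi\|x\|^2\,\le\,\|D_cT_\Phi x\|^2\,\le\,\|D_c|\ran T_\Phi\|^2B_\Phi\|x\|^2.
\]

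For the frame-operator formula, once (i) holds, $T_\Psi=D_cT_\Phi$ is bounded and everywhere defined, so $S_\Psi=T_\Psi^*T_\Psi=(D_cT_\Phi)^*(D_cT_\Phi)$ is immediate. To rewrite $(D_cT_\Phi)^*=\ol{T_\Phi^*D_c}$, I would argue as follows. For $v\in\dom D_c$ and $x\in\calH$ one has, using selfadjointness of $D_c$ and $T_\Phi x\in\dom D_c$,
\[
\lk T_\Phi^*D_cv,x\rk\,=\,\lk D_cv,T_\Phi x\rk\,=\,\lk v,D_cT_\Phi x\rk\,=\,\lk(D_cT_\Phi)^*v,x\rk,
\]
so $(D_cT_\Phi)^*$ is a bounded extension of $T_\Phi^*D_c$. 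Hence $T_\Phi^*D_c$ is closable; its closure $\ol{T_\Phi^*D_c}$ is contained in the closed operator $(D_cT_\Phi)^*$, and because $\dom D_c$ is dense in $\ell^2(J)$ and $T_\Phi^*D_c$ is bounded on $\dom D_c$, the closure extends by continuity to all of $\ell^2(J)$ and therefore coincides with $(D_cT_\Phi)^*$.

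The main obstacle is the careful handling of domains around the (possibly) unbounded $D_c$: the upper frame bound is what lets us deduce $\ran T_\Phi\subset\dom D_c$ in the first place, and the identification $\ol{T_\Phi^*D_c}=(D_cT_\Phi)^*$ cannot be quoted from a bounded-operator product rule, since $T_\Phi^*D_c$ is only a priori defined on $\dom D_c$. Both points dissolve once one exploits that $\ran T_\Phi$ is closed (from $T_\Phi$ being ICR) and that $\dom D_c$ is dense (from selfadjointness of $D_c$), so the proof is really a sequence of short domain-tracking arguments anchored in these two facts.
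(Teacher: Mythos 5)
Your proposal is correct and follows essentially the same route as the paper: the identification $T_\Psi=D_cT_\Phi$, the closed graph theorem for boundedness of $D_c|\ran T_\Phi$ in the direction (ii)$\Rightarrow$(i), the lower frame bound combined with $\|T_\Phi\|$ to get the ICR estimate, and the identification $(D_cT_\Phi)^*=\ol{T_\Phi^*D_c}$ via density of $\dom D_c$ are all exactly the paper's steps. No gaps.
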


\begin{proof}
(i)$\Sra$(ii). Assume that $\Psi$ is a frame and denote its analysis operator by $T_\Psi$. Then, for $x\in\calH$, the
$j$-th component of $T_\Psi x$ is given by
$$
(T_\Psi x)_j = \l x,c_j\vphi_j\r = c_j\l x,\vphi_j\r = (D_cT_\Phi x)_j.
$$
Hence, $T_\Psi = D_cT_\Phi$. As $\dom T_\Psi = \calH$, this implies $\ran T_\Phi\subset\dom D_c$. Since $\Phi$ is a frame,
$\ran T_\Phi$ is a closed subspace. And since $\Psi$ is a frame, there exist $A',B' > 0$ such that
$A'\|x\|^2\le\|D_cT_\Phi x\|_2^2\le B'\|x\|^2$ for all $x\in\calH$. In particular, for $v = T_\Phi x\in\ran T_\Phi$ we have
$$
\|D_c v\|_2^2 = \|D_cT_\Phi x\|_2^2\ge A'\|x\|^2\ge A'\|T_\Phi\|^{-2}\|v\|_2^2,
$$
which shows that $D_c|\ran T_\Phi$ is an ICR-operator.

(ii)$\Sra$(i). Conversely, assume that $\ran T_\Phi\subset\dom D_c$ and that $D_c|\ran T_\Phi$ is ICR. By the closed graph
theorem and $\ran T_\Phi\subset\dom D_c$, the operator $D_c|\ran T_\Phi$ is bounded, which implies the existence of $A',B' > 0$ such that
$$
A'\|v\|_2^2 \le \|D_c v\|_2^2 \le B'\|v\|_2^2
$$
holds for all $v\in\ran T_\Phi$. Setting $v = T_\Phi x$ and noting that $T_\Phi$ is bounded and ICR, we obtain constants $A'',B'' > 0$ such that
$$
A''\|x\|^2 \le \|D_c T_\Phi x\|_2^2 \le B''\|x\|^2
$$
holds for all $x\in\calH$. Consequently, $\Psi$ is a frame.

It remains to prove the {\em moreover}-part, i.e., that $(D_c T_\Phi)^* = \ol{T_\Phi^*D_c}$. Since $D_cT_\Phi$ is bounded, so is its adjoint
$(D_cT_\Phi)^*$. In addition, it is easy to see that $T_\Phi^* D_c v = (D_cT_\Phi)^*v$ holds for all $v$ in the dense subspace $\dom D_c$.
Hence, $T_\Phi^* D_c$ is bounded and densely defined. Its bounded closure thus coincides with $(D_cT_\Phi)^*$.
\end{proof}

It is evident that the operator $D_c$ in Proposition \ref{p:diag_frame} is in general unbounded. The following corollary provides a condition on the frame $\Phi$ which leads to necessarily bounded diagonal operators $D_c$ in Proposition \ref{p:diag_frame}. We remark that $\liminf_{j\in J}$ shall be interpreted as $\liminf_{j\in J,\,j\to\infty}$, which is a proper definition, since $J\subset\N$ was assumed. As it is custom, we set $\liminf_{j\in J}$ to $\infty$ if $J$ is finite.

\begin{cor}\label{c:diag_frame}
Let $\Phi$, $\Psi$ and $c$ be as in Proposition {\rm\ref{p:diag_frame}} and assume $\liminf_{j\in J}\|\vphi_j\| > 0$.
Then $\Psi$ is a frame if and only if $D_c$ is bounded and $D_c|\ran T_\Phi$ is ICR. In this case, we have
$$
S_\Psi = (D_cT_\Phi)^*(D_cT_\Phi) = T_\Phi^*D_c^2T_\Phi.
$$
\end{cor}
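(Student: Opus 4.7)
The plan is to use Proposition \ref{p:diag_frame} as a black box and leverage the hypothesis $\liminf_j\|\varphi_j\|>0$ to promote the restricted boundedness encoded in the ICR-condition on $\ran T_\Phi$ to global boundedness of $D_c$ on all of $\ell^2(J)$. For the direction $(\Leftarrow)$, boundedness of $D_c$ simply means $\dom D_c=\ell^2(J)\supset\ran T_\Phi$, so the hypotheses of Proposition \ref{p:diag_frame}(ii) are met and $\Psi$ is a frame.

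For $(\Rightarrow)$, Proposition \ref{p:diag_frame} already supplies that $\ran T_\Phi\subset\dom D_c$ and that $D_c|\ran T_\Phi$ is ICR, so the only remaining task is to show $D_c$ is bounded on all of $\ell^2(J)$, i.e.\ that $(c_j)_{j\in J}$ is a bounded sequence. For this I would invoke the standard observation that any frame $\Psi$ with upper bound $B'$ satisfies $\|c_j\varphi_j\|^2\le B'$ for every $j$: substituting $x=c_j\varphi_j$ into the upper frame inequality and isolating the $k=j$ term yields $\|c_j\varphi_j\|^4\le B'\|c_j\varphi_j\|^2$, whence $c_j\|\varphi_j\|\le\sqrt{B'}$. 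The $\liminf$ hypothesis then produces $\gamma>0$ and a finite exceptional set $F\subset J$ with $\|\varphi_j\|\ge\gamma$ off $F$, so $c_j\le\sqrt{B'}/\gamma$ off $F$; on $F$, each $c_j$ is either individually bounded by $\sqrt{B'}/\|\varphi_j\|$ (if $\varphi_j\ne 0$) or may be taken to be $0$ without altering $\Psi$ (if $\varphi_j=0$). Hence $\sup_j c_j<\infty$ and $D_c$ is bounded.

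Once $D_c$ is bounded it is also selfadjoint (as $c_j\in\R$), so $T_\Phi^*D_c$ is a composition of bounded operators and coincides with its own closure; the formula $S_\Psi=\overline{T_\Phi^*D_c}\,D_cT_\Phi$ from Proposition \ref{p:diag_frame} accordingly collapses to $T_\Phi^*D_c\cdot D_cT_\Phi=T_\Phi^*D_c^2T_\Phi$. The delicate step I would anticipate is precisely the handling of the finitely many "small-norm" indices: boundedness on $\ran T_\Phi$ alone does not control $c_j$ on standard basis vectors $e_j$ outside $\ran T_\Phi$, which is exactly why the upper frame bound applied to the frame vectors themselves, together with the $\liminf$ assumption, is needed to transfer the bound from $\ran T_\Phi$ to the full sequence $(c_j)$.
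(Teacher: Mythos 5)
Your argument is correct and takes essentially the same route as the paper: both directions reduce to Proposition \ref{p:diag_frame}, and boundedness of $D_c$ is obtained by isolating the diagonal term of the upper frame inequality tested against the frame vectors themselves, giving $c_j\|\vphi_j\|\le\sqrt{B'}$, which the $\liminf$ hypothesis converts into a uniform bound off a finite set. The only superfluous move is ``taking $c_j=0$'' where $\vphi_j=0$: since $c$ is a given fixed sequence, each such $c_j$ is a finite scalar and there are only finitely many exceptional indices, so $\sup_j c_j<\infty$ without altering $c$.
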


\begin{proof}
If $D_c$ has the above-mentioned properties, then $\Psi$ is a frame by Proposition \ref{p:diag_frame}. If $\Psi$ is a
frame, then there exists $B > 0$ such that for each $x\in\calH$ we have
$$
\sum_{j\in J}c_j^2|\l x,\vphi_j\r|^2\,\le\,B\|x\|^2.
$$
In particular, for $k\in J$, $c_k^2\|\vphi_k\|^4\le B\|\vphi_k\|^2$. Since there exist $\delta > 0$ and $j_0\in J$ such that
$\|\vphi_j\|\ge\delta$ for all $j\in J$, $j\ge j_0$, this implies $c_k\le B^{1/2}\delta^{-1}$ for all $k\in J$, $k\ge j_0$.
Thus $D_c$ is bounded as $\|D_c\| = \sup_{j\in J}c_k$.
\end{proof}

%*****************************************************************************************************************
%*****************************************************************************************************************

\subsection{General Equivalent Condition}
\label{subsec:generalequivalentcondition}

We now state a seemingly obvious equivalent condition to scalability, which is however not straightforward to
state and prove in the general setting of an arbitrary separable Hilbert space.

\begin{prop}\label{p:1st_charac}
Let $\Phi = \{\vphi_j\}_{j\in J}$ be a frame for $\calH$. Then the following conditions are equivalent.
\bitem
\item[{\rm (i)}] $\Phi$ is {\rm (}positively, strictly{\rm )} scalable.
\item[{\rm (ii)}] There exists a non-negative {\rm (}positive, strictly positive, respectively{\rm )} diagonal operator $D$ in $\ell^2(J)$
such that
\begin{equation}\label{e:1st_charac}
\ol{T_\Phi^*D}DT_\Phi = I_\calH.
\end{equation}
\eitem
\end{prop}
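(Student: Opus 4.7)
The plan is to use Proposition \ref{p:diag_frame} as the main engine, recognizing that a Parseval frame is precisely one whose frame operator equals $I_\calH$.

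For the direction (i)$\Rightarrow$(ii), I would argue as follows. Assume $\Phi$ is scalable with scalars $c_j\ge 0$ and set $D := D_c$. By hypothesis, $\Psi = \{c_j\vphi_j\}_{j\in J}$ is a Parseval frame, so in particular it is a frame. Proposition \ref{p:diag_frame} then gives $\ran T_\Phi\subset\dom D_c$, that $D_cT_\Phi$ is a bounded operator on all of $\calH$, and the formula $S_\Psi=\ol{T_\Phi^*D_c}D_cT_\Phi$. Since $\Psi$ is Parseval, $S_\Psi = I_\calH$, which is exactly \eqref{e:1st_charac}. The non-negativity, positivity, and strict positivity assertions for $D$ follow from the corresponding conditions on the $c_j$, which were recorded in the earlier discussion of diagonal operators.

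For the converse (ii)$\Rightarrow$(i), write $D = D_c$ for the given sequence $c=(c_j)$ with the appropriate sign condition. The identity $\ol{T_\Phi^*D_c}D_cT_\Phi = I_\calH$ forces $T_\Phi x\in\dom D_c$ for every $x\in\calH$, hence $\ran T_\Phi\subset\dom D_c$. The operator $D_cT_\Phi$ is therefore everywhere defined on $\calH$; as the composition of the bounded $T_\Phi$ with the closed operator $D_c$, it is closed, and by the closed graph theorem it is bounded. The key auxiliary identification is then $\ol{T_\Phi^*D_c}=(D_cT_\Phi)^*$: the inclusion $T_\Phi^*D_c\subset (D_cT_\Phi)^*$ follows by a direct inner-product computation on $\dom D_c$, while the reverse inclusion after closure uses that $(D_cT_\Phi)^*$ is bounded and $\dom D_c$ is dense in $\ell^2(J)$. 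Plugging this into \eqref{e:1st_charac} yields $(D_cT_\Phi)^*(D_cT_\Phi)=I_\calH$, so $D_cT_\Phi$ is an isometry, i.e.\ $\sum_j c_j^2|\l x,\vphi_j\r|^2=\|x\|^2$ for all $x\in\calH$. Hence $\{c_j\vphi_j\}_{j\in J}$ is a Parseval frame, and the corresponding positivity conditions on the $c_j$ come straight from those imposed on $D$.

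The main technical hurdle is the converse direction, because in infinite dimensions the formal identity \eqref{e:1st_charac} involves the closure $\ol{T_\Phi^*D_c}$ rather than the raw product $T_\Phi^*D_c$, and a priori neither $D_c$ nor $T_\Phi^*D_c$ need be bounded. The clean way through, as sketched above, is (a) extract $\ran T_\Phi\subset\dom D_c$ from the very formulation of \eqref{e:1st_charac}, (b) invoke the closed graph theorem to upgrade $D_cT_\Phi$ to a bounded everywhere-defined operator, and (c) identify the adjoint of this bounded operator with the closure $\ol{T_\Phi^*D_c}$, at which point the statement collapses to the familiar characterization of Parseval frames via isometric analysis operators.
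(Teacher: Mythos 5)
Your proposal is correct and follows essentially the same route as the paper's proof: direction (i)$\Rightarrow$(ii) via Proposition \ref{p:diag_frame} and the fact that Parseval means $S_\Psi=I_\calH$, and direction (ii)$\Rightarrow$(i) by extracting $\ran T_\Phi\subset\dom D_c$, applying the closed graph theorem to $D_cT_\Phi$, identifying $\ol{T_\Phi^*D_c}=(D_cT_\Phi)^*$, and reading off that $D_cT_\Phi$ is isometric. The only cosmetic difference is that you conclude the Parseval property directly from the isometry identity $\sum_j c_j^2|\l x,\vphi_j\r|^2=\|x\|^2$, whereas the paper first checks that $D|\ran T_\Phi$ is ICR and then cites Proposition \ref{p:diag_frame} again; both are fine.
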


\begin{proof}
(i)$\Sra$(ii). If $\Phi$ is scalable with a sequence of non-negative scalars $(c_j)_{j\in J}$, then $\Psi := \{c_j\vphi_j\}_{j\in J}$ is a
Parseval frame. In particular, $\Psi$ is a frame, which, by Proposition \ref{p:diag_frame}, implies that
$\ran T_\Phi\subset\dom D_c$ and that $S_\Psi = \ol{T_\Phi^*D_c}D_cT_\Phi$ is the frame operator of $\Psi$. Since the frame
operator of a Parseval frame coincides with the identity operator, it follows that $\ol{T_\Phi^*D_c}D_cT_\Phi = I_\calH$.

(ii)$\Sra$(i). Conversely, assume that there exists a non-negative diagonal operator $D$ in $\ell^2(J)$ such that $\ol{T_\Phi^*D}DT_\Phi = I_\calH$.
Then $DT_\Phi$ is everywhere defined. In particular, this implies that $\ran T_\Phi\subset\dom D$. Since $T_\Phi$ is bounded and $D$ is
closed, the operator $DT_\Phi$ is closed. Hence, by the closed graph theorem, $DT_\Phi$ is a bounded operator from $\calH$ into $\ell^2(J)$. In fact,
$(DT_\Phi)^*(DT_\Phi) = I_\calH$ implies that $DT_\Phi$ is even isometric. Thus, from the boundedness of $T_\Phi$ we conclude that
$D|\ran T_\Phi$ is ICR. Let $c = (c_j)_{j\in J}$ be the sequence of non-negative scalars such that $D = D_c$. As a consequence
of Proposition \ref{p:diag_frame}, $\Psi := \{c_j\vphi_j\}_{j\in J}$ is a frame with frame operator $S_\Psi = I_\calH$, which implies
that $\Psi$ is a Parseval frame.

The proofs for positive and strict scalability of $\Phi$ follow analogous lines.
\end{proof}

Under certain assumptions, the relation \eqref{e:1st_charac} can be simplified as stated in the following remark which directly follows
from Corollary \ref{c:diag_frame}.

\begin{rem}\label{r:bounded}
If $\delta := \liminf_{j\in J}\|\vphi_j\| > 0$, then a diagonal operator $D$ as in Proposition \ref{p:1st_charac} is necessarily
bounded, and \eqref{e:1st_charac} reads
\[
T_\Phi^*D^2T_\Phi = I_\calH.
\]
\end{rem}

Before stating our main theorem in this section, we first provide a highly useful implication of Proposition \ref{p:1st_charac},
showing that scalability is invariant under unitary transformations.

\begin{cor}\label{c:unitary}
Let $U$ be a unitary operator in $\calH$. Then a frame $\Phi = \{\vphi_j\}_{j\in J}$ for $\calH$ is scalable if and only if the
frame $U\Phi = \{U\vphi_j\}_{j\in J}$ is scalable.
\end{cor}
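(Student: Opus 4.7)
The plan is to apply Proposition \ref{p:1st_charac} and reduce the claim to an algebraic manipulation of diagonal operators and analysis operators. First I would compute the analysis operator of the unitarily transformed frame: for $x\in\calH$ and $j\in J$,
\[
(T_{U\Phi}x)_j = \l x,U\vphi_j\r = \l U^*x,\vphi_j\r = (T_\Phi U^*x)_j,
\]
so $T_{U\Phi} = T_\Phi U^*$, and therefore $T_{U\Phi}^* = UT_\Phi^*$.

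Next I would show that any diagonal operator that witnesses the scalability of $\Phi$ also witnesses the scalability of $U\Phi$. Assume $\Phi$ is scalable and let $D$ be the non-negative diagonal operator on $\ell^2(J)$ provided by Proposition \ref{p:1st_charac}, i.e.\ $\ol{T_\Phi^*D}DT_\Phi = I_\calH$. I claim that the same $D$ satisfies $\ol{T_{U\Phi}^*D}DT_{U\Phi} = I_\calH$. Using the identities above,
\[
DT_{U\Phi} = DT_\Phi U^*,\qquad T_{U\Phi}^*D = UT_\Phi^*D\text{ on }\dom D,
\]
and hence
\[
\ol{T_{U\Phi}^*D}\,DT_{U\Phi} \;=\; \ol{UT_\Phi^*D}\,DT_\Phi U^* \;=\; U\,\ol{T_\Phi^*D}\,DT_\Phi\,U^* \;=\; UU^* \;=\; I_\calH,
\]
after which scalability of $U\Phi$ follows from the reverse direction of Proposition \ref{p:1st_charac}. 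The same computation respects positivity and strict positivity of the diagonal entries, so positive and strict scalability transfer as well. The converse direction is immediate by applying the forward implication to the unitary $U^*$ and the frame $U\Phi$.

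The only subtle point — and the single step I would verify carefully — is the identity $\ol{UT_\Phi^*D} = U\ol{T_\Phi^*D}$. This holds because $U$ is a bounded operator defined on all of $\calH$: conjugation by a bounded everywhere-defined operator on the left commutes with taking the closure of a (densely defined, here bounded) operator. Concretely, if $v_n\in\dom D$ with $v_n\to v$ and $T_\Phi^*Dv_n\to w$, then $UT_\Phi^*Dv_n \to Uw$, showing $\ol{UT_\Phi^*D}$ extends $U\ol{T_\Phi^*D}$; the reverse inclusion follows from the same continuity argument applied to $U^{-1}$. Since $T_\Phi^*D$ is already bounded and densely defined (as noted in the proof of Proposition \ref{p:diag_frame}), both sides equal the unique bounded extension, closing the argument.
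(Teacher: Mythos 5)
Your proof is correct and follows essentially the same route as the paper: compute $T_{U\Phi}=T_\Phi U^*$, reuse the same diagonal operator $D$ via Proposition \ref{p:1st_charac}, and run the identical chain of equalities ending in $UU^*=I_\calH$. The only difference is that you explicitly justify $\ol{UT_\Phi^*D}=U\ol{T_\Phi^*D}$, a step the paper uses silently; your justification is sound.
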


\begin{proof}
Let $\Phi$ be a scalable frame for $\calH$ with diagonal operator $D$. Since the analysis operator of $U\Phi$ is given by $T_{U\Phi} = T_\Phi U^*$,
\[
\ol{T_{U\Phi}^*D}DT_{U\Phi} = \ol{UT_\Phi^*D}DT_\Phi U^* = U\ol{T_\Phi^*D}DT_\Phi U^* = UU^* = I_\calH,
\]
which implies scalability of $U\Phi$.

The converse direction can be proved similarly.
\end{proof}

%*****************************************************************************************************************
%*****************************************************************************************************************

\subsection{Main Result}
\label{subsec:mainresult}

To state the main result of this section, we require the notion of an orthogonal basis, which we recall for the convenience of the reader.
A sequence $\{v_k\}_k$ of non-zero vectors in a Hilbert space $\calK$ is called an {\em orthogonal basis} of $\calK$, if
$\inf_k\|v_k\| > 0$ and $(v_k/\|v_k\|)_k$ is an orthonormal basis of $\calK$.

The following result provides several equivalent conditions for a frame $\Phi$ to be strictly scalable. We are already familiar with
condition (ii). Condition (iii) can be interpreted as a `diagonalization' of the Grammian of $\Phi$, and condition (iv) shows
that $\Phi$ can be orthogonally expanded to an orthogonal basis.

\begin{thm}\label{t:main}
Let $\Phi = \{\vphi_j\}_{j\in J}$ be a frame for $\calH$ such that $\liminf_{j\in J}\|\vphi_j\| > 0$, and let $T = T_\Phi$ denote its
analysis operator. Then the following statements are equivalent.
\begin{enumerate}
\item[{\rm (i)}]   The frame $\Phi$ is strictly scalable.
\item[{\rm (ii)}]  There exists a strictly positive bounded diagonal operator $D$ in $\ell^2(J)$ such that $DT$ is isometric {\rm (}that is,
$T^*D^2T = I_\calH${\rm )}.
\item[{\rm (iii)}] There exist a Hilbert space $\calK$ and a bounded ICR operator $L : \calK\to\ell^2(J)$ such that $TT^* + LL^*$ is a
strictly positive bounded diagonal operator.
\item[{\rm (iv)}]  There exist a Hilbert space $\calK$ and a frame $\Psi = \{\psi_j\}_{j\in J}$ for $\calK$ such that the vectors
$$
\vphi_j\oplus\psi_j\in\calH\oplus\calK,\quad j\in J,
$$
form an orthogonal basis of $\calH\oplus\calK$.
%\item[{\rm (v)}]   There exists an orthogonal basis $\{a_j\}_{j\in J}$ of $\ell^2(J)$ such that $T\vphi_j = Pa_j$ for all $j\in J$,
%where $P$ denotes the orthogonal projection in $\ell^2(J)$ onto $\ran T$ {\rm (}which is given by $P = TS_\Phi^{-1}T^*${\rm )}.
\end{enumerate}
If one of the above conditions holds, then the frame $\Psi$ from {\rm (iv)} is strictly scalable, its analysis operator is given by an operator $L$ from {\rm (iii)}, and with a diagonal operator $D$ from {\rm (ii)} we have
\begin{equation}\label{e:zudem}
L^*D^2L = I_{\calK},\quad\text{and}\quad L^*D^2T = 0.
\end{equation}
\end{thm}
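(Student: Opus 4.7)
The proof is arranged as a cycle (i) $\Leftrightarrow$ (ii) $\Rightarrow$ (iv) $\Rightarrow$ (iii) $\Rightarrow$ (ii), followed by the \emph{moreover} clauses. The equivalence (i) $\Leftrightarrow$ (ii) is a direct specialization of Proposition \ref{p:1st_charac}: the strictly positive diagonal operator $D$ produced there is necessarily bounded by Remark \ref{r:bounded} (since $\liminf\|\vphi_j\|>0$), and the defining identity collapses to $T^*D^2T = I_\calH$, which is exactly the statement that $DT$ is isometric.

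For (ii) $\Rightarrow$ (iv), I would view the isometry $V := DT$ as identifying $\calH$ with the closed subspace $V\calH\subset\ell^2(J)$ and set $\calK := (V\calH)^\perp$. The map $U:\calH\oplus\calK\to\ell^2(J)$, $U(x\oplus v):=Vx+v$, is then unitary, so $\{U^*e_j\}_{j\in J}$ is an orthonormal basis of $\calH\oplus\calK$. Resolving this basis into its components via $\langle U^*e_j,x\oplus 0\rangle = \langle e_j,Vx\rangle = c_j\langle\vphi_j,x\rangle$ and $\langle U^*e_j,0\oplus v\rangle = \langle e_j,v\rangle$ yields $U^*e_j = (c_j\vphi_j)\oplus(P_\calK e_j)$. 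Setting $\psi_j := c_j^{-1}P_\calK e_j$ gives the ONB $\{c_j(\vphi_j\oplus\psi_j)\}$; the bounds $c_j\in[\delta,M]$ force $\|\vphi_j\oplus\psi_j\| = c_j^{-1}\in[M^{-1},\delta^{-1}]$, so $\{\vphi_j\oplus\psi_j\}$ is an orthogonal basis of $\calH\oplus\calK$, and the direct computation $\sum_j c_j^{-2}|\langle y,e_j\rangle|^2 = \|y\|^2$ for $y\in\calK$ shows $\{\psi_j\}$ is a frame for $\calK$.

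The passage (iv) $\Rightarrow$ (iii) is then essentially a Gram matrix computation: with $L := T_\Psi$ (bounded and ICR since $\Psi$ is a frame), one checks $(TT^*+LL^*)_{kj} = \langle\vphi_j\oplus\psi_j,\vphi_k\oplus\psi_k\rangle$, which is diagonal by orthogonality with diagonal entries $\|\vphi_j\oplus\psi_j\|^2$ bounded strictly away from $0$ and from $\infty$. For (iii) $\Rightarrow$ (ii), set $D := \Delta^{-1/2}$ (strictly positive bounded diagonal) and $M := [T\ L]:\calH\oplus\calK\to\ell^2(J)$, so that $MM^* = \Delta$ and $(DM)(DM)^* = I_{\ell^2(J)}$. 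Then $M^*D^2M$ is the orthogonal projection onto $(\ker M)^\perp\subseteq\calH\oplus\calK$, and the desired identity $T^*D^2T = I_\calH$ (the $(1,1)$-block of this projection) reduces to the geometric transversality $\ran T\cap\ran L = \{0\}$. I expect this transversality step to be the main obstacle, to be handled using the ICR property of $L$ together with the diagonal structure of $\Delta$, with careful closed-range arguments in the infinite-dimensional case.

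For the \emph{moreover} clause, once the cycle closes and (iv) holds with $L = T_\Psi$, strict scalability of $\Psi$ with the same scalars $c_j$ is immediate from Parseval's identity for the ONB $\{c_j(\vphi_j\oplus\psi_j)\}$ applied to vectors $0\oplus y$; this simultaneously identifies $L$ as the analysis operator of $\Psi$ from (iii) and delivers $L^*D^2L = I_\calK$. Applying the same identity to cross pairs $(x\oplus 0,\,0\oplus y)$ yields the orthogonality $L^*D^2T = 0$, completing \eqref{e:zudem}.
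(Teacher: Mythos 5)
Your cycle (i)$\Leftrightarrow$(ii)$\Rightarrow$(iv)$\Rightarrow$(iii)$\Rightarrow$(ii) differs from the paper's ((ii)$\Leftrightarrow$(iii) followed by (iii)$\Leftrightarrow$(iv)), and the pieces (i)$\Leftrightarrow$(ii), (ii)$\Rightarrow$(iv) and (iv)$\Rightarrow$(iii) are essentially sound: the direct unitary identification $U(x\oplus v)=DTx+v$ is a clean shortcut past the paper's intermediate use of (iii). One small slip: for $y\in\calK$ the quantity $\sum_j c_j^{-2}|\langle y,e_j\rangle|^2$ is only comparable to $\|y\|^2$ (via $c_j\in[\delta,M]$), not equal to it; that still gives the frame bounds you need.

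The genuine gap is exactly where you suspected it: (iii)$\Rightarrow$(ii). You reduce $T^*D^2T=I_\calH$ to the transversality $\ran T\cap\ran L=\{0\}$ (equivalently, injectivity of $[T\ L]$) and defer its proof, hoping that the ICR property of $L$ and the diagonal structure of $\Delta$ will deliver it. They will not: the transversality does not follow from the hypotheses of (iii). Take $\calH=\R^2$ and $\Phi=\{(1,0),(1,1)\}$, so that $TT^*=\smallmat{1}{1}{1}{2}$; with $\calK=\R$ and $L=(1,-1)^T$ (bounded, injective, closed range) one gets $TT^*+LL^*=\diag(2,3)$, a strictly positive bounded diagonal operator --- yet $\Phi$ is a non-orthogonal basis and hence not scalable. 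So condition (iii), read literally, can hold while (i) and (ii) fail, and no argument can close your cycle as stated. (The paper's own proof buries the same issue in the unjustified assertion that the operator $G$ of \eqref{e:G} is an isomorphism: $GG^*=D^{-2}$ being boundedly invertible yields surjectivity of $G$ and injectivity of $G^*$, but not injectivity of $G$.) To repair the step one must add a hypothesis to (iii) that pins $\calK$ down --- e.g.\ $\ker[T\ L]=\{0\}$, or, as in Corollary \ref{r:finite}, $\dim\calK=M-N$ in the finite case --- after which your projection argument (the $(1,1)$-block of $M^*D^2M=I$) goes through. Your instinct that this was ``the main obstacle'' was correct; as it stands, the obstacle is fatal rather than technical, and your proof of the theorem is incomplete at this step.
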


\begin{proof}
(i)$\Slra$(ii). This equivalence follows from Proposition \ref{p:1st_charac} (see also Remark \ref{r:bounded}).

(ii)$\Slra$(iii). For the proof of (ii)$\Sra$(iii) let $D$ be a strictly positive bounded diagonal operator in $\ell^2(J)$
such that $T^*D^2T = I_\calH$. For the Hilbert space $\calK$ in (iii) we choose $\calK := (\ran DT)^\perp = \ker T^*D\subset\ell^2(J)$. On $\calK$
we define the operator $L : \calK\to\ell^2(J)$ by $L := D^{-1}|\calK$, which clearly is a bounded ICR operator. Then $L^* = P_\calK D^{-1}$,
where $P_\calK$ denotes the orthogonal projection in $\ell^2(J)$ onto $\calK$. Let us show that $DTT^*D + P_\calK$ coincides with the
identity operator on $\ell^2(J)$. Then
\begin{align*}
TT^* + LL^*
&= D^{-1}DTT^*DD^{-1} + D^{-1}P_\calK D^{-1}\\
&= D^{-1}\big(DTT^*D + P_\calK\big)D^{-1}\\
&= D^{-2},
\end{align*}
which is a strictly positive bounded diagonal operator in $\ell^2(J)$, and (iii) is proved. Since $DT$ is isometric, we have
$$
\big(DTT^*D\big)^2 = DT(DT)^*(DT)T^*D = DTT^*D,
$$
which shows that $DTT^*D$ is a projection. Moreover, $DTT^*D$ is selfadjoint and thus an orthogonal projection. Since its kernel coincides with
$\ker T^*D = \calK$, it is the orthogonal projection onto $\calK^\perp$. This shows that $DTT^*D + P_\calK = I_{\ell^2(J)}$.

To prove the converse implication, suppose that (iii) holds with a Hilbert space $\calK$ and a bounded ICR operator $L : \calK\to\ell(J)$,
such that $TT^* + LL^* = D^{-2}$ with a strictly positive bounded diagonal operator $D$. Note that also $D^{-1}$ is strictly positive and
bounded. Define the operator
\begin{equation}\label{e:G}
G : \calH\oplus\calK\to\ell^2(J),\quad G\vek x y := Tx + Ly,\;\;\vek x y\in\calH\oplus\calK.
\end{equation}
Then $G^*v = (T^*v,L^*v)^T$, $v\in\ell^2(J)$, and hence $GG^* = TT^* + LL^* = D^{-2}$. In particular, $G$ is an isomorphism between
$\calH\oplus\calK$ and $\ell^2(J)$. Moreover, we have
$$
G^*D^2G = G^*D^2D^{-2}G^{-*} = I_{\calH\oplus\calK}.
$$
This implies that
$$
\mat{I_\calH}00{I_\calK} = \vek{T^*}{L^*}(D^2T,D^2L) = \mat{T^*D^2T}{T^*D^2L}{L^*D^2T}{L^*D^2L},
$$
or, equivalently,
$$
T^*D^{2}T = I_\calH, \quad L^*D^2L = I_\calK,\quad\text{and}\quad L^*D^2T = 0,
$$
which, in particular, yields (ii) (and \eqref{e:zudem}).

(iii)$\Slra$(iv). For the implication (iii)$\Sra$(iv), let $D$, $L$ and $G$ be as above and define $\psi_j := L^*e_j$, $j\in J$, where
$e_j$ denotes the $j$-th vector of the standard orthonormal basis $\{e_j\}_{j\in J}$ of $\ell^2(J)$. As $L$ is a
bounded ICR operator and
$$
\sum_{j\in J}|\l x,\psi_j\r|^2 = \sum_{j\in J}|\l x,L^*e_j\r|^2 = \sum_{j\in J}|\l Lx,e_j\r|^2 = \|Lx\|^2
$$
for all $x\in\calK$, it follows that $\Psi = \{\psi_j\}_{j\in J}$ is a frame. Note that $T^*e_j = \vphi_j$, $j\in J$.
Hence, $\vphi_j\oplus\psi_j = T^*e_j \oplus L^*e_j = G^*e_j$, $j\in J$, and therefore
$$
\l\vphi_j\oplus\psi_j,\vphi_k\oplus\psi_k\r = \l G^*e_j,G^*e_k\r = \l GG^*e_j,e_k\r = \l D^{-2}e_j,e_k\r = c_j^{-2}\delta_{jk}.
$$
As the $c_j$'s are bounded and $G^*$ is an isomorphism, this shows that the sequence $\{\vphi_j\oplus\psi_j\}_{j\in\ J}$ is an orthogonal basis of $\ell^2(J)$.

Finally, to prove the converse implication, suppose that (iv) holds true and denote by $L$ the analysis operator of the frame $\Psi$. Since
$\{\vphi_j\oplus\psi_j\}_{j\in J}$ is an orthogonal basis of $\calH\oplus\calK$, for all $j,k\in J$ we have
$\l\vphi_j,\vphi_k\r + \l\psi_j,\psi_k\r = d_j\delta_{jk}$, where $d_j = \|\vphi_j\|^2 + \|\psi_j\|^2$, $j\in J$.
Note that the sequence $(d_j)_{j\in J}$ is bounded and bounded from below by a positive constant. Hence, for all $j,k\in J$,
\begin{align*}
\l(TT^* + LL^*)e_j,e_k\r
&= \l T^*e_j,T^*e_k\r + \l L^*e_j,L^*e_k\r = \l\vphi_j,\vphi_k\r + \l\psi_j,\psi_k\r\\
&= d_j\delta_{jk} = \l d_je_j,e_k\r.
\end{align*}
This implies $TT^* + LL^* = D_d$, where $d := (d_j)_{j\in J}$. The operator $D_d$ is a strictly positive bounded diagonal operator,
which proves (iii).
\end{proof}

The restriction of conditions (iii) and (iv) in Theorem \ref{t:main} to the situation of finite frames is not immediate and requires
some thought. This is the focus of the next result.

\begin{cor}\label{r:finite}
Let $\Phi = \{\vphi_j\}_{j=1}^M$ be a frame for $\K^N$ and let $T = T_\Phi\in\K^{M\times N}$ denote the matrix representation of its analysis operator. Then the following statements are equivalent.
\begin{enumerate}
\item[{\rm (i)}]   The frame $\Phi$ is strictly scalable.
\item[{\rm (ii)}]  There exists a positive definite diagonal matrix $D\in\K^{M\times M}$ such that $DT$ is isometric.
\item[{\rm (iii)}] There exists $L\in\K^{M\times (M - N)}$ such that $TT^* + LL^*$ is a positive definite diagonal matrix.
\item[{\rm (iv)}]  There exists a frame $\Psi = \{\psi_j\}_{j=1}^M$ for $\K^{M-N}$ such that
$\{\vphi_j \oplus \psi_j\}_{j=1}^M \in \K^{M}$ forms an orthogonal basis of $\K^{M}$.
\end{enumerate}
\end{cor}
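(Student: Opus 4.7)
My plan is to deduce this corollary from Theorem~\ref{t:main} by verifying that, in the finite-dimensional setting, the auxiliary Hilbert space $\calK$ appearing in conditions (iii) and (iv) of the theorem is necessarily of dimension $M-N$. The equivalence (i)$\Leftrightarrow$(ii) I would dispatch immediately via Theorem~\ref{t:main}(i)$\Leftrightarrow$(ii) together with Remark~\ref{r:bounded}, since the hypothesis $\liminf_{j\in J}\|\vphi_j\|>0$ holds vacuously when $J$ is finite (by the paper's convention) and every operator is automatically bounded.

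For (ii)$\Rightarrow$(iii) I would mimic the construction used in the proof of Theorem~\ref{t:main}: set $\calK:=\ker(T^*D)\subset\K^M$, which has dimension $M-N$ because $T$ has rank $N$ and $D$ is invertible. Then $L:=D^{-1}|_\calK$, identified with a matrix in $\K^{M\times(M-N)}$ via an orthonormal basis of $\calK$, satisfies $TT^*+LL^*=D^{-2}$, a positive definite diagonal matrix. Conversely, for (iii)$\Rightarrow$(ii), the crux is to observe that any $L\in\K^{M\times(M-N)}$ making $TT^*+LL^*$ positive definite must have full column rank and hence be injective (and thus ICR in finite dimensions): since $\ran(TT^*)$ has dimension only $N$, the range of $L$ must contribute the remaining $M-N$ dimensions, so $L$ has rank $M-N$. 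One can then invoke Theorem~\ref{t:main}(iii)$\Rightarrow$(ii) with $\calK=\K^{M-N}$.

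For (iii)$\Leftrightarrow$(iv) I would translate the corresponding equivalence of Theorem~\ref{t:main} into the matrix setting: in the forward implication the frame $\Psi$ produced has analysis operator $L\in\K^{M\times(M-N)}$, so $\Psi$ lives in $\K^{M-N}$, and the reverse direction is immediate. The dimension $M-N$ is in any case forced by counting, since an orthogonal basis of $\K^N\oplus\calK$ with exactly $M$ elements requires $\dim\calK=M-N$. I do not foresee a serious obstacle; the argument is essentially dimensional bookkeeping built on Theorem~\ref{t:main}. The only case to flag is the degenerate $M=N$, where $\Phi$ is already an orthogonal basis after scaling, $L$ has zero columns, and $\Psi$ is the empty frame, all of which fit the stated equivalences consistently under the obvious conventions.
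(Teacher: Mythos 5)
Your proposal is correct and follows essentially the same route as the paper: both deduce the corollary from Theorem~\ref{t:main} by checking that the auxiliary space $\calK$ must have dimension $M-N$ and that any $L$ as in (iii) is automatically injective (the paper gets injectivity from the non-singularity of $G=[T|L]$, you from a rank count on $TT^*+LL^*$ — a negligible difference). The remaining steps (invoking Remark~\ref{r:bounded} for (i)$\Leftrightarrow$(ii), transporting $\calK$ to $\K^{M-N}$ by an isometry, and translating (iii)$\Leftrightarrow$(iv)) match the paper's argument.
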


\begin{proof}
We prove this result by using the equivalent conditions from Theorem \ref{t:main}. First of all, we observe that $\calH = \K^N$ and
$\ell^2(J) = \K^M$. Moreover, condition (ii) obviously coincides with Theorem \ref{t:main}(ii), so that (i)$\Slra$(ii) holds. The
equivalence (iii)$\Slra$(iv) can be shown in a similar way as the equivalence (iii)$\Slra$(iv) in Theorem \ref{t:main}.

Hence, it remains to show that (iii) and the condition (iii) in Theorem \ref{t:main} are equivalent. For this, assume that (iii) holds,
set $\calK := \K^{M - N}$ and $G := [T|L]\in\K^{M\times M}$. Then, since $GG^* = TT^* + LL^*$ is a positive
definite diagonal matrix, it follows that $G$ is non-singular and therefore $\ker L = \{0\}$. Thus, $L$ is ICR, and (iii) in Theorem
\ref{t:main} holds.
For the converse, recall that the operator $G : \K^N\oplus\calK\to\K^M$ in \eqref{e:G} was shown to be an isomorphism in the proof of
Theorem \ref{t:main}. Hence, $\dim\calK = M - N$. Thus, with some (bijective) isometry $V : \K^{M-N}\to\calK$ and
$\wt L := LV\in\K^{M\times (M - N)}$ we have $TT^* + \wt L\wt L^* = TT^* + LL^*$.
\end{proof}

Finally, we apply Theorem \ref{t:main} to the special case of finite frames with $N+1$ frame vectors in $\K^N$, which leads
to a quite easily checkable condition for scalability. For this, we again require some prerequisites. Letting $\Phi = \{\vphi_i\}_{j=1}^M$
be a frame for the Hilbert space $\K^N$, by $\calO_\Phi$ we denote the set of indices $k\in \{1,\ldots,M\}$ for which $\l\vphi_k,\vphi_j\r = 0$
holds for all $j\in \{1,\ldots,M\}\setminus\{k\}$. Note that $\calO_\Phi = \{1,\ldots,M\}$ holds if and only if $\Phi$ is an orthogonal basis
of $\K^N$. In particular, this implies $M=N$.

\begin{cor}\label{t:N+1}
Let $\Phi = \{\vphi_j\}_{j=1}^{N+1}$ be a frame for $\K^N$ such that $\vphi_j\neq 0$ for all $j=1,\ldots,N+1$. Then
$\calO_\Phi\neq\{1,\ldots,N+1\}$, and the following statements are equivalent.
\begin{enumerate}
\item[{\rm (i)}]   $\Phi$ is strictly scalable.
\item[{\rm (ii)}]  There exist $k\in\{1,\ldots,N+1\}\setminus\calO_\Phi$ and $c > 0$ such that
$$
\l\vphi_i,\vphi_k\r\l\vphi_k,\vphi_j\r = -c\l\vphi_i,\vphi_j\r
$$
holds for all $i,j\in\{1,\ldots,N+1\}\setminus\{k\}$, $i\neq j$.
\item[{\rm (iii)}] For all $k\in\{1,\ldots,N+1\}\setminus\calO_\Phi$ there exists $c_k > 0$ such that
$$
\l\vphi_i,\vphi_k\r\l\vphi_k,\vphi_j\r = -c_k\l\vphi_i,\vphi_j\r
$$
holds for all $i,j\in\{1,\ldots,N+1\}\setminus\{k\}$, $i\neq j$.
\end{enumerate}
\end{cor}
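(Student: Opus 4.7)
The plan is to use the equivalence (i)$\Leftrightarrow$(iv) from Corollary \ref{r:finite}: strict scalability of $\Phi$ is equivalent to the existence of scalars $\psi_j\in\K$ (since $M-N=1$) such that the vectors $\vphi_j\oplus\psi_j\in\K^{N+1}$ form an orthogonal basis, i.e.\ are pairwise orthogonal and all nonzero. Since $\vphi_j\neq 0$, the nonzero condition is automatic, so the whole question reduces to solving the system
$$
\l\vphi_i,\vphi_j\r + \psi_i\overline{\psi_j} = 0,\qquad i\neq j,
$$
for a frame $\Psi=\{\psi_j\}$ in $\K$ (i.e.\ with at least one $\psi_j\neq 0$). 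The preliminary statement $\calO_\Phi\neq\{1,\ldots,N+1\}$ is immediate: if all $\vphi_j$ were pairwise orthogonal then we would have $N+1$ nonzero pairwise orthogonal vectors in $\K^N$, which is impossible.

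For (i)$\Rightarrow$(iii), I would fix any $k\notin\calO_\Phi$ and first show that the corresponding scalar $\psi_k$ must be nonzero: if $\psi_k=0$, the orthogonality equations with $i=k$ would force $\l\vphi_k,\vphi_j\r=0$ for all $j\neq k$, contradicting $k\notin\calO_\Phi$. Then setting $c_k:=|\psi_k|^2>0$ and solving $\psi_k\overline{\psi_j}=-\l\vphi_k,\vphi_j\r$ for $\psi_j$ (where $j\neq k$), substitution into the remaining equations $\psi_i\overline{\psi_j}=-\l\vphi_i,\vphi_j\r$ (for $i,j\neq k$, $i\neq j$) yields exactly the identity in (iii).

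For (ii)$\Rightarrow$(i), given $k\notin\calO_\Phi$ and $c>0$ satisfying the stated identity, I would define $\psi_k:=\sqrt{c}$ and $\psi_j:=-\l\vphi_j,\vphi_k\r/\sqrt{c}$ for $j\neq k$, and check directly that $\l\vphi_i,\vphi_j\r+\psi_i\overline{\psi_j}=0$ for all $i\neq j$: the cases involving $k$ follow from the definition of the $\psi_j$, and the remaining cases use precisely the hypothesis in (ii). Since $\psi_k\neq 0$, the sequence $\{\psi_j\}$ is a frame for $\K$, and since each $\vphi_j\oplus\psi_j\neq 0$, we obtain $N+1$ nonzero pairwise orthogonal vectors in $\K^{N+1}$, hence an orthogonal basis; Corollary \ref{r:finite}(iv) then gives strict scalability.

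The implication (iii)$\Rightarrow$(ii) is trivial once we observe that $\calO_\Phi\neq\{1,\ldots,N+1\}$ guarantees the existence of at least one admissible index $k$. The only mildly delicate point is the argument that $\psi_k\neq 0$ for $k\notin\calO_\Phi$ in the direction (i)$\Rightarrow$(iii); everything else is a direct algebraic translation of the orthogonal-basis characterization from Theorem \ref{t:main}(iv) into the bilinear identities stated in (ii) and (iii).
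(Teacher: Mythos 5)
Your proposal is correct and follows essentially the same route as the paper: the paper works with condition (iii) of Corollary \ref{r:finite} (a vector $v$ making $T_\Phi T_\Phi^*+vv^*$ diagonal) rather than the orthogonal-basis condition (iv), but the resulting equations $\l\vphi_i,\vphi_j\r+v_i\overline{v_j}=0$ and the key steps — $v_k\neq 0$ for $k\notin\calO_\Phi$ in one direction, and the explicit construction $v_k=\sqrt{c}$, $v_j=-\l\vphi_j,\vphi_k\r/\sqrt{c}$ in the other — are identical to yours.
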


\begin{proof}
As remarked before, $\calO_\Phi = \{1,\ldots,N+1\}$ implies that $\Phi$ is an orthogonal basis of $\K^N$, which is impossible.

(i)$\Sra$(iii). For this, let $k\in\{1,\ldots,N+1\}\setminus\calO_\Phi$ be arbitrary. By Theorem \ref{t:main} (see also
Corollary \ref{r:finite}) there exists $v = (v_1,\ldots,v_{N+1})^T\in\K^{N+1}$ such that $T_\Phi T_\Phi^* + vv^*$ is a diagonal
matrix. Hence, $\l\vphi_i,\vphi_j\r + v_i\ol{v_j} = 0$ holds for all $i,j\in\{1,\ldots,N+1\}$, $i\neq j$. Therefore, for
$i,j\in\{1,\ldots,N+1\}\setminus\{k\}$, $i\neq j$, we have
$$
\l\vphi_i,\vphi_k\r\l\vphi_k,\vphi_j\r = v_i\ol{v_j}|v_k|^2 = -|v_k|^2 \l\vphi_i,\vphi_j\r.
$$
If $v_k = 0$, then $\l\vphi_k,\vphi_j\r = 0$ for all $j\in\{1,\ldots,N+1\}\setminus\{k\}$. But since $k\notin\calO_\Phi$ was assumed,
it follows that $|v_k|^2 > 0$, and (iii) holds.

(iii)$\Sra$(ii). This is obvious.

(ii)$\Sra$(i). Assume now that (ii) is satisfied, and set
$$
v_k := \sqrt{c}\quad\text{and}\quad v_j := -v_k^{-1}\l\vphi_j,\vphi_k\r\quad(j\in\{1,\ldots,N+1\}\setminus\{k\}).
$$
Then $v_i\ol{v_k} = -\l\vphi_i,\vphi_k\r$ for $i\in\{1,\ldots,N+1\}\setminus\{k\}$ and
$$
v_i\ol{v_j} = |v_k|^{-2}\l\vphi_i,\vphi_k\r\l\vphi_k,\vphi_j\r = -\l\vphi_i,\vphi_j\r
$$
for $i,j\in\{1,\ldots,N+1\}\setminus\{k\}$, $i\neq j$. This implies that $T_\Phi T_\Phi^* + vv^*$ is a diagonal matrix whose diagonal
entries are positive (since otherwise $0 = \|\vphi_j\|^2 + |v_j|^2$ and thus $\vphi_j = 0$ for some $j\in\{1,\ldots,N+1\}$). Now,
(i) follows from Theorem \ref{t:main}.
\end{proof}

As mentioned above, Corollary \ref{t:N+1} might be utilized to test whether a frame for $\K^N$ with $N+1$ frame vectors is strictly
scalable or not. Such a test would consist of finding an index $k\notin\calO_\Phi$ and checking whether there exists a $c > 0$
such that $\l\vphi_i,\vphi_k\r\l\vphi_k,\vphi_j\r = -c\l\vphi_i,\vphi_j\r$ holds for all $i,j\in\{1,\ldots,N+1\}\setminus\{k\}$,
$i\neq j$.

%*****************************************************************************************************************
%*****************************************************************************************************************
%*****************************************************************************************************************

\section{Scalability of Real Finite Frames}\label{s:real}

We next aim for a more geometric characterization of scalability. For this, we now focus on frames for $\R^N$.
The reason why we restrict ourselves to real frames is that in the proof of the main theorem in this section we
make use of the following variant of Farkas' Lemma which only exists for real vector spaces.

\begin{lem}\label{l:farkas}
Let $A : V\to W$ be a linear mapping between finite-dimensio\-nal real Hilbert spaces $(V,\aproduct_V)$ and $(W,\aproduct_W)$,
let $\{e_i\}_{i=1}^N$ be an orthonormal basis of $V$ and let $b\in W$. Then exactly one of the following statements holds:
\begin{enumerate}
\item[{\rm (i)}]  There exists $x\in V$ such that $Ax = b$ and $\lk x,e_i\rk_V\ge 0$ for all $i=1,\ldots,N$.
\item[{\rm (ii)}] There exists $y\in W$ such that $\lk b,y\rk_W < 0$ and $\lk Ae_i,y\rk_W\ge 0$ for all $i=1,\ldots,N$.
\end{enumerate}
\end{lem}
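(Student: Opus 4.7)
The plan is to treat this as a convex-separation statement for finitely generated cones. First I would dispatch the easy half: if both (i) and (ii) held simultaneously, then writing $x = \sum_{i=1}^N \lambda_i e_i$ with $\lambda_i = \langle x,e_i\rangle_V \ge 0$, I would compute
\[
\langle b,y\rangle_W = \langle Ax,y\rangle_W = \sum_{i=1}^N \lambda_i \langle Ae_i,y\rangle_W \ge 0,
\]
in contradiction to $\langle b,y\rangle_W < 0$. So the two alternatives are mutually exclusive.

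For the main implication I would show that if (i) fails then (ii) must hold. Consider the set
\[
C := \Big\{\sum_{i=1}^N \lambda_i Ae_i : \lambda_1,\dots,\lambda_N \ge 0\Big\} \subset W,
\]
which equals $\{Ax : x\in V,\;\langle x,e_i\rangle_V \ge 0\text{ for all }i\}$, so the failure of (i) is exactly the statement $b\notin C$. The set $C$ is a non-empty convex cone containing $0$, and the first crucial step is to establish that $C$ is closed in $W$. Assuming this, the Hilbert projection theorem applied to the closed convex set $C$ produces a unique $c_0\in C$ minimizing $\|c-b\|$, and setting $y := c_0 - b$ the variational inequality yields $\langle y,c-c_0\rangle_W \ge 0$ for every $c\in C$. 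Since $C$ is a cone, taking $c = \mu c'$ with $\mu\to\infty$ for any $c'\in C$ forces $\langle y,c'\rangle_W \ge 0$, while taking $c = 0$ and $c = 2c_0$ pins down $\langle y,c_0\rangle_W = 0$. Consequently $\langle y,b\rangle_W = -\|y\|^2 < 0$ (note $y\ne 0$ since $b\notin C$) and $\langle Ae_i,y\rangle_W \ge 0$ for each $i$, giving (ii).

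The main obstacle is the closedness of the finitely generated cone $C$, which does not follow automatically from finite generation of a convex set in an infinite-dimensional sense but is the standard Minkowski-Weyl fact in finite dimensions. I would prove it by induction on $N$: for $N=1$ the ray $\{\lambda A e_1 : \lambda\ge 0\}$ is visibly closed; for the inductive step, writing $C = C' + \mathbb{R}_{\ge 0}\cdot Ae_N$ where $C'$ is the cone generated by $Ae_1,\dots,Ae_{N-1}$, one shows that a limit point of $C$ can be realized with bounded coefficients by a Caratheodory-type argument (after discarding redundant generators that lie in the linear span of a minimal generating subset). Alternatively, one can reduce to the case where $\{Ae_1,\dots,Ae_N\}$ is linearly independent, in which case $C$ is the image of the closed cone $\mathbb{R}_{\ge 0}^N$ under a linear injection into $\ran A$, hence closed.

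Once closedness is in hand, the remaining work is purely the separation-and-scaling argument sketched above, so I expect the proof to be short modulo this one geometric input; the author may well prefer to cite the closedness of finitely generated cones as a known fact and proceed directly to the projection step.
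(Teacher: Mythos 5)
Your proposal is correct and is precisely the classical separation/projection proof of Farkas' Lemma that the paper itself invokes without writing out (it simply notes the lemma ``can be proved in complete analogy to the classical Farkas' Lemma'' and cites \cite[Thm 5.1]{ber}), so you are on the paper's intended route: exclusivity by positivity of the pairing, and the converse by projecting $b$ onto the closed finitely generated cone $C$ and using the variational inequality. The only phrasing to tighten is the closedness step: one does not ``reduce to the case where $\{Ae_1,\ldots,Ae_N\}$ is linearly independent,'' but rather uses the conic Carath\'eodory theorem to write $C$ as the \emph{finite union} of the cones generated by linearly independent subsets of the generators, each of which is closed as the image of $\R_{\ge 0}^k$ under a linear injection, whence $C$ is closed.
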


Lemma \ref{l:farkas} can be proved in complete analogy to the classical Farkas' Lemma, where $V = \R^n$ and $W = \R^m$, $n,m\in\N$.
A proof of this statement can, for instance, be found in \cite[Thm 5.1]{ber}.

%*****************************************************************************************************************
%*****************************************************************************************************************

\subsection{Characterization Result}
\label{subsec:characterization}

The following theorem provides a characterization of non-scalability of a finite frame specifically tailored to the
finite-dimensional case. In Subsection \ref{subsec:geo}, condition (iii) will then be utilized to derive an illuminating
geometric interpretation.

\begin{thm}\label{t:quadric}
Let $\Phi = \{\vphi_j\}_{j=1}^M\subset\R^N\setminus\{0\}$ be a frame for $\R^N$. Then the following statements are equivalent.
\begin{enumerate}
\item[{\rm (i)}]   $\Phi$ is not scalable.
\item[{\rm (ii)}]  There exists a symmetric matrix $Y\in\R^{N\times N}$ with $\tr(Y) < 0$ such that $\vphi_j^TY\vphi_j\ge 0$ for all $j = 1,\ldots,M$.
\item[{\rm (iii)}] There exists a symmetric matrix $Y\in\R^{N\times N}$ with $\tr(Y) = 0$ such that $\vphi_j^TY\vphi_j > 0$ for all $j = 1,\ldots,M$.
\end{enumerate}
\end{thm}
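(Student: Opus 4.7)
The plan is to recast scalability of $\Phi$ as a linear feasibility problem on the real vector space $\mathrm{Sym}(N)$ of real symmetric $N\times N$ matrices, and then apply Lemma \ref{l:farkas} in this setting. Writing $u_j := c_j^2$, scalability amounts to the existence of coefficients $u_j\ge 0$ with
\[
\sum_{j=1}^M u_j\,\vphi_j\vphi_j^T \;=\; I_N,
\]
i.e., the membership of $I_N$ in the conical hull of the rank-one matrices $\vphi_j\vphi_j^T$ inside $\mathrm{Sym}(N)$.

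To turn this into the shape required by Lemma \ref{l:farkas}, I would set $V=\R^M$ with its standard orthonormal basis $\{e_j\}_{j=1}^M$, take $W=\mathrm{Sym}(N)$ equipped with the Frobenius inner product $\l A,B\r=\tr(AB)$, define $A\colon V\to W$ by $A u := \sum_{j=1}^M u_j\,\vphi_j\vphi_j^T$, and put $b:=I_N$. Then $Ae_j=\vphi_j\vphi_j^T$ and $\l x,e_j\r_V=x_j$, so alternative (i) of Lemma \ref{l:farkas} is exactly scalability of $\Phi$. Consequently, non-scalability is equivalent to alternative (ii), which reads: there is $Y\in\mathrm{Sym}(N)$ with
\[
\l I_N,Y\r=\tr(Y)<0 \qquad\text{and}\qquad \l\vphi_j\vphi_j^T,Y\r=\vphi_j^T Y\vphi_j\ge 0\;\;(j=1,\ldots,M).
\]
This is precisely (ii) of the theorem, establishing (i)$\Leftrightarrow$(ii).

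For the equivalence (ii)$\Leftrightarrow$(iii), both directions reduce to a rescaling/translation of $Y$ by a multiple of $I_N$. Given $Y$ as in (ii), I would consider
\[
Y' \,:=\, Y - \tfrac{\tr(Y)}{N}\,I_N,
\]
so that $\tr(Y')=0$ and, for each $j$, $\vphi_j^T Y'\vphi_j = \vphi_j^T Y\vphi_j - \tfrac{\tr(Y)}{N}\|\vphi_j\|^2 \ge -\tfrac{\tr(Y)}{N}\|\vphi_j\|^2 > 0$, using $\tr(Y)<0$ and the standing hypothesis $\vphi_j\neq 0$. Conversely, given $Y$ as in (iii), I would take $Y'':=Y-\veps I_N$ for a sufficiently small $\veps>0$; then $\tr(Y'')=-\veps N<0$, while the strict inequalities $\vphi_j^T Y\vphi_j>0$ together with finiteness of the index set $\{1,\ldots,M\}$ allow me to choose $\veps$ so small that $\vphi_j^T Y''\vphi_j\ge 0$ for every $j$.

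The only genuine obstacle is the first step: formulating scalability so that Lemma \ref{l:farkas} applies verbatim, in particular identifying the right finite-dimensional ambient real Hilbert space ($\mathrm{Sym}(N)$ with the Frobenius inner product) so that the linear map $u\mapsto \sum_j u_j\vphi_j\vphi_j^T$ and the target $b=I_N$ fit the hypotheses. Once the dictionary $Ae_j=\vphi_j\vphi_j^T$ and $\l A,B\r=\tr(AB)$ is in place, the identity $\tr(Y\vphi_j\vphi_j^T)=\vphi_j^T Y\vphi_j$ makes the translation of Farkas' alternative into the stated quadratic-form condition automatic, and the remaining equivalence (ii)$\Leftrightarrow$(iii) is a one-line perturbation argument in both directions.
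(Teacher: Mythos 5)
Your proposal is correct and follows essentially the same route as the paper: both recast scalability as the feasibility of $\sum_j u_j\vphi_j\vphi_j^T=I_N$ with $u\ge 0$ and apply Lemma \ref{l:farkas} on the space of symmetric matrices with the trace inner product, and both handle (ii)$\Rightarrow$(iii) by adding $-\tfrac{\tr(Y)}{N}I_N$. The only cosmetic difference is that you close the cycle via a perturbation argument for (iii)$\Rightarrow$(ii), whereas the paper argues (iii)$\Rightarrow$(i) directly; both are valid one-line steps.
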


\begin{proof}
(i)$\Slra$(ii). Let $W$ denote the vector space of all symmetric matrices $X\in\R^{N\times N}$, and let $\aproduct_W$ denote the scalar product on $W$
defined by $\lk X,Y\rk_W := \tr(XY)$, $X,Y\in W$. Furthermore, define the linear mapping $A : \R^M\to W$ by
$$
Ax := T_\Phi^T\diag(x)T_\Phi,\quad x\in\R^M.
$$
By Proposition \ref{p:1st_charac} the frame $\Phi$ is not scalable if and only if there exists no $x\in\R^M$, $x\ge 0$, with $Ax = I_N$. Hence, due to
Lemma \ref{l:farkas}, $\Phi$ is not scalable if and only if there exists $Y\in W$ with $\tr(Y) = \lk I_N,Y\rk_W < 0$ such that
$$
0\le\lk Ae_j,Y\rk_W = \tr((Ae_j)Y) = \tr(\vphi_j\vphi_j^TY) = \vphi_j^TY\vphi_j
$$
holds for all $j=1,\ldots,M$, where $\{e_j\}_{j=1}^M$ denotes the standard basis of $\R^M$. This proves the equivalence of (i) and (ii).

(ii)$\Sra$(iii). For this, let $Y_1\in W$ with $\alpha := -\tr(Y_1) > 0$ such that $\vphi_j^TY_1\vphi_j\ge 0$ for
all $j = 1,\ldots,M$, and set $Y := Y_1 + \frac{\alpha}{N}I_N$. Then $\tr(Y) = 0$ and $\vphi_j^TY\vphi_j > 0$ for all $j = 1,\ldots,M$, as desired.

(iii)$\Sra$(i). Assume now, that there exists $Y\in W$ as in (iii), that is, $\lk I_N,Y\rk_W = 0$ and $\lk Ae_j,Y\rk_W > 0$ for all $j$. Suppose that
$\Phi$ is scalable. Then there exists $x\in\R^M$, $x\ge 0$, such that $Ax = I_N$. This implies
$$
0 = \lk I_N,Y\rk_W = \lk Ax,Y\rk_W = \sum_{j=1}^M x_j\lk Ae_j,Y\rk_W,
$$
which yields $x = 0$, contrary to the assumption $Ax = I_N$. The theorem is proved.
\end{proof}

This theorem can be used to derive a result on the topological structure of the set of non-scalable frames for $\R^N$.
In fact, the corollary we will draw shows that this set is open in the following sense.

\begin{cor}
Let $\Phi = \{\vphi_j\}_{j=1}^M\subset\R^N\setminus\{0\}$ be a frame for $\R^N$ which is not scalable. Then there exists
$\veps > 0$ such that each set of vectors $\{\psi_j\}_{j=1}^M\subset\R^N$ with
\begin{equation}\label{e:eps}
\|\vphi_j - \psi_j\| < \veps\quad\text{ for all }j = 1,\ldots,M
\end{equation}
is a frame for $\R^N$ which is not scalable.
\end{cor}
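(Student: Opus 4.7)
The plan is to exploit the characterization of non-scalability given by Theorem~\ref{t:quadric}(iii), which is formulated entirely through strict inequalities and is therefore robust under small perturbations. The same symmetric matrix $Y$ that certifies non-scalability of $\Phi$ should continue to certify non-scalability of any sufficiently small perturbation $\Psi$, provided $\Psi$ is still a frame.

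Concretely, I would proceed in three steps. First, since $\Phi$ is not scalable, Theorem~\ref{t:quadric} produces a symmetric matrix $Y\in\R^{N\times N}$ with $\tr(Y)=0$ and $\vphi_j^T Y \vphi_j > 0$ for every $j=1,\ldots,M$. Because there are only finitely many indices, $\delta := \min_{1\le j\le M} \vphi_j^T Y \vphi_j$ is strictly positive. Second, the map $v\mapsto v^T Y v$ is continuous, so I would pick $\veps_1 > 0$ small enough that $\|v-\vphi_j\|<\veps_1$ implies $v^T Y v > \delta/2$ for every $j$; in particular, for such a perturbation, every $\psi_j$ is nonzero and $\psi_j^T Y \psi_j > 0$. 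Third, I need to ensure the perturbation still forms a frame for $\R^N$. Since $\Phi$ is a frame, $T_\Phi\in\R^{M\times N}$ has rank $N$, so its smallest singular value $\sigma_N(T_\Phi)$ is strictly positive. Because $\|T_\Psi - T_\Phi\|_{\mathrm{op}} \le \bigl(\sum_j \|\psi_j-\vphi_j\|^2\bigr)^{1/2} \le \sqrt{M}\max_j \|\psi_j-\vphi_j\|$, Weyl's perturbation inequality for singular values gives $\veps_2 > 0$ such that $\|\psi_j-\vphi_j\| < \veps_2$ for all $j$ forces $\sigma_N(T_\Psi) \ge \sigma_N(T_\Phi)/2 > 0$; hence $T_\Psi$ has rank $N$ and $\Psi$ is a frame for $\R^N$.

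Setting $\veps := \min(\veps_1,\veps_2)$, any $\{\psi_j\}_{j=1}^M\subset\R^N$ satisfying \eqref{e:eps} is a frame for $\R^N$ with $\psi_j\neq 0$, and the same matrix $Y$ witnesses condition (iii) of Theorem~\ref{t:quadric} for $\Psi$ (same $Y$, same trace zero, and now $\psi_j^T Y \psi_j > 0$). The implication (iii)$\Rightarrow$(i) of Theorem~\ref{t:quadric} then concludes that $\Psi$ is not scalable. The argument is essentially a direct continuity/openness observation; the only mildly technical point is simultaneously preserving the frame property and the strict positivity $\psi_j^T Y \psi_j > 0$, which is handled by taking the minimum of two threshold values.
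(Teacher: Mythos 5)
Your proof is correct and follows essentially the same route as the paper: both arguments rest on the openness of the strict-inequality certificate from Theorem~\ref{t:quadric} under perturbation of the frame vectors, combined with a separate continuity argument ensuring the perturbed system remains a frame. The only cosmetic differences are that you invoke condition (iii) directly (where the paper starts from (ii) and shifts $Y$ by $\delta I_N$ to obtain strict inequalities), and you preserve the frame property via Weyl's singular-value perturbation bound rather than via continuity of the determinant on a spanning subset; both variants are valid.
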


\begin{proof}
Choosing a subset $J$ of $\{1,\ldots,M\}$ such that $\{\vphi_j\}_{j\in J}$ is a basis of $\R^N$, it follows from the
continuity of the determinant that there exists $\veps_1 > 0$ such that all sets of vectors $\{\psi_j\}_{j=1}^M\subset\R^N$
with \eqref{e:eps} ($\veps$ replaced by $\veps_1$) are frames. By Theorem \ref{t:quadric}, there exists a symmetric matrix
$Y\in\R^{N\times N}$ with $\tr(Y) < 0$ such that $\vphi_i^TY\vphi_i\ge 0$ for all $i$. By adding $\delta I_N$ to $Y$ with
some $\delta > 0$ we may assume without loss of generality that $\tr(Y) < 0$ and $\vphi_j^TY\vphi_j > 0$ for all $j$
(note that the frame vectors of $\Phi$ are assumed to be non-zero). Since the function $x\mapsto x^TYx$ is continuous,
it follows that there exists $\veps\in (0,\veps_1)$ such that for each frame $\{\psi_j\}_{j=1}^M\subset\R^N$ with
\eqref{e:eps} we have $\psi_j^TY\psi_j > 0$ for all $j$. By Theorem \ref{t:quadric}, the frame $\{\psi_j\}_{j=1}^M$ is
not scalable, which finishes the proof.
\end{proof}

%*****************************************************************************************************************
%*****************************************************************************************************************

\subsection{Geometric Interpretation}\label{subsec:geo}

We now aim to analyze the geometry of the vectors of a non-scalable frame. To derive a precise geometric characterization
of non-scalability, we will in particular exploit Theorem \ref{t:quadric}. As a first step, notice that each of the sets
\[
C_\pm(Y) := \{x\in\R^N : \pm x^TYx > 0\}, \quad Y\in\R^{N\times N} \mbox{ symmetric,}
\]
considered in Theorem \ref{t:quadric} (iii) is in fact an open cone with the additional property that $x\in C_\pm(Y)$
implies $-x\in C_\pm(Y)$. Thus, in the sequel we need to focus our attention on the impact of the condition $\tr(Y) = 0$
on the shape of these cones.

We start by introducing a particular class of conical surfaces, which due to their relation to quadrics -- the exact
relation being revealed below -- are coined `conical zero-trace quadrics'.

\begin{defn}
Let the {\em class of conical zero-trace quadrics} $\calC_N$ be defined as the family of sets
\beq\label{eq:quadricvariety}
\left\{x\in\R^N : \sum_{k=1}^{N-1} a_k \l x,e_k\r^2 = \l x,e_N\r^2\right\},
\eeq
where $\{e_k\}_{k=1}^N$ runs through all orthonormal bases of $\R^N$ and $(a_k)_{k=1}^{N-1}$
runs through all tuples of elements in $\R\setminus\{0\}$ with $\sum_{k=1}^{N-1} a_k = 1$.
\end{defn}

The next example provides some intuition on the geometry of the elements in this class in dimension $N=2,3$.

\newpage

\begin{ex} \label{ex:N23}
{\hspace{2cm}}
\begin{itemize}
\item $N=2$. In this case, by setting $e_\pm := (1/\sqrt 2)(e_1\pm e_2)$, a straightforward computation shows that
$\calC_2$ is the family of sets
\[
\{x\in\R^2 : \l x,e_-\r\l x,e_+\r = 0\},
\]
where $\{e_-,e_+\}$ runs through all orthonormal bases of $\R^2$.
Thus, each set in $\calC_2$ is the boundary surface of a {\it quadrant cone} in $\R^2$, i.e., the union of two orthogonal one-dimensional subspaces in $\R^2$.
\item $N=3$. In this case, it is not difficult to prove that $\calC_2$ is the family of sets
\[
\left\{x\in\R^3 : a\l x,e_1\r^2 + (1-a)\l x,e_2\r^2 = \l x,e_3\r^2\right\},
\]
where $\{e_i\}_{i=1}^3$ runs through all orthonormal bases of $\R^3$ and $a$ runs through all elements in $(0,1)$.
The sets in $\calC_3$ are the boundary surfaces of a particular class of {\it elliptical cones} in $\R^3$.

To analyze the structure of these conical surfaces we let $\{e_1, e_2, e_3\}$ be the standard unit basis and $a\in (0,1)$. Then the quadric
\[
\left\{x\in\R^3 : a\l x,e_1\r^2 + (1-a)\l x,e_2\r^2 = \l x,e_3\r^2\right\}
\]
intersects the planes $\{x_3 = \pm 1\}$ in
\[
\left\{(x_1,x_2,\pm 1) : ax_1^2 + (1-a)x_2^2 =1\right\}.
\]
These two sets are ellipses intersecting the corner points $(\pm 1,\pm 1,\pm 1)$ of the unit cube. Thus, the
considered quadrics are elliptical conical surfaces with their vertex in the origin, characterized by the fact
that they intersect the corners of a rotated unit cube in $\R^3$,
see also Figure \ref{fig:1}(b) and (c).
\end{itemize}
\end{ex}

Note that \eqref{eq:quadricvariety} is by rotation unitarily equivalent to the set
\begin{equation}\label{e:quadric}
\left\{x\in\R^N : x_N^2-\sum_{k=1}^{N-1} a_k x_k^2 = 0\right\}.
\end{equation}
Such surfaces uniquely determine cones by considering their interior or exterior. Similarly, we call the sets
\[
\left\{x\in\R^N : \sum_{k=1}^{N-1} a_k \l x,e_k\r^2 < \l x,e_N\r^2\right\}
\]
\[
\mbox{and} \quad \left\{x\in\R^N : \sum_{k=1}^{N-1} a_k \l x,e_k\r^2 > \l x,e_N\r^2\right\}
\]
the {\em interior} and the {\em exterior} of the conical zero-trace quadric in \eqref{eq:quadricvariety}, respectively.

Armed with this notion, we can now state the result on the geometric characterization of non-scalability.

\begin{thm}\label{t:geometry}
Let $\Phi\subset\R^N\setminus\{0\}$ be a frame for $\R^N$. Then the following conditions are equivalent.
\begin{itemize}
\item[{\rm (i)}]   $\Phi$ is not scalable.
\item[{\rm (ii)}]  All frame vectors of $\Phi$ are contained in the interior of a conical zero-trace quadric of $\calC_N$.
\item[{\rm (iii)}]  All frame vectors of $\Phi$ are contained in the exterior of a conical zero-trace quadric of $\calC_N$.
\end{itemize}
\end{thm}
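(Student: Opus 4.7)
The plan is to use Theorem \ref{t:quadric}(iii) as the bridge: non-scalability is equivalent to the existence of a symmetric matrix $Y$ with $\tr(Y)=0$ and $\vphi_j^TY\vphi_j>0$ for all $j$, and the geometric statements (ii), (iii) are precisely the coordinate-free translations of such a $Y$, once one distinguishes whether the distinguished eigendirection corresponds to a positive or negative eigenvalue.

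I would first prove the easy directions (ii)$\Sra$(i) and (iii)$\Sra$(i) by explicit construction. Given an orthonormal basis $\{e_k\}_{k=1}^N$ of $\R^N$ and real numbers $a_1,\dots,a_{N-1}\in\R\setminus\{0\}$ with $\sum_{k=1}^{N-1}a_k=1$, set
\[
Y \,:=\, \sum_{k=1}^{N-1} a_k\, e_ke_k^T - e_Ne_N^T.
\]
Then $\tr(Y)=\sum_{k=1}^{N-1}a_k-1=0$ and $x^TYx=\sum_{k=1}^{N-1}a_k\lk x,e_k\rk^2-\lk x,e_N\rk^2$, which is $>0$ on the exterior and $<0$ on the interior of the corresponding conical zero-trace quadric. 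Hence if all $\vphi_j$ lie in the exterior, then $Y$ satisfies the hypotheses of Theorem \ref{t:quadric}(iii); if they all lie in the interior, then $-Y$ does. Either way, $\Phi$ is not scalable.

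For (i)$\Sra$(iii) (and symmetrically (i)$\Sra$(ii)) I would start from $Y$ provided by Theorem \ref{t:quadric}(iii) and diagonalise it: $Y=\sum_{k=1}^N\lambda_k v_kv_k^T$ in an orthonormal eigenbasis $\{v_k\}$ with $\sum_k\lambda_k=0$. Since $Y\neq 0$ and the eigenvalues sum to zero, at least one is strictly negative; after relabeling so that $\lambda_N<0$, I set $e_k:=v_k$ and $a_k:=\lambda_k/(-\lambda_N)$ for $k=1,\ldots,N-1$. Then $\sum_{k=1}^{N-1}a_k=(-\lambda_N)/(-\lambda_N)=1$, and the condition $\vphi_j^TY\vphi_j>0$ divides through by $-\lambda_N>0$ to become $\sum_{k=1}^{N-1}a_k\lk\vphi_j,e_k\rk^2>\lk\vphi_j,e_N\rk^2$, which is (iii). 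For (ii) the argument is identical, except one relabels a strictly positive eigenvalue as $\lambda_N$, producing the inequality in the opposite direction.

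The main technical obstacle is that the definition of $\calC_N$ requires $a_k\ne 0$ for all $k=1,\ldots,N-1$, which forces $\lambda_k\ne 0$ for the non-distinguished indices. The $Y$ delivered by Theorem \ref{t:quadric}(iii) may well have a kernel, so I would interpose a perturbation step: within the linear subspace $\{Y\in W:\tr(Y)=0\}$, the nonsingular matrices form a dense open subset because $\det$ is a polynomial that does not vanish identically there (witness $\diag(N-1,-1,\dots,-1)$). Since the conditions $\vphi_j^TY\vphi_j>0$ are strict, open, and only finitely many in number, they persist under any sufficiently small zero-trace perturbation; thus $Y$ may be replaced by a nonsingular $Y'$ satisfying all the required hypotheses, whose eigenvalues are then all nonzero, making the $a_k$ defined above automatically nonzero. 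This completes the chain (i)$\Slra$(ii) and (i)$\Slra$(iii), hence the three conditions are mutually equivalent.
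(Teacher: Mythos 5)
Your proof is correct and follows essentially the same route as the paper: both reduce to Theorem \ref{t:quadric}(iii), diagonalise $Y$ in an orthonormal eigenbasis, and normalise by a distinguished negative (resp.\ positive) eigenvalue to pass between the zero-trace condition and $\sum_{k=1}^{N-1}a_k=1$. The only difference is cosmetic: where the paper dismisses the nonsingularity of $Y$ with ``due to continuity reasons,'' you spell out the density of nonsingular zero-trace symmetric matrices and the openness of the finitely many strict conditions, which is a welcome bit of added rigour.
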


\begin{proof}
We only prove (i)$\Slra$(ii). The equivalence (i)$\Slra$(iii) can be proved similarly. By Theorem \ref{t:quadric}, a frame
$\Phi = \{\vphi_j\}_{j=1}^M\subset\R^N\setminus\{0\}$ for $\R^N$ is not scalable if and only if there exists a real symmetric
$N\times N$-matrix $Y$ with $\tr(Y) = 0$ such that $\vphi_j^TY\vphi_j > 0$ for all $j=1,\ldots,M$. Equivalently, there exist
an orthogonal matrix $U\in\R^{N\times N}$ and a diagonal matrix $D\in\R^{N\times N}$ with $\tr(D) = 0$ such that
$(U\vphi_j)^TD(U\vphi_j) > 0$ for all $j = 1,\ldots,M$. Note that, due to continuity reasons, the matrix $D$ can be
chosen non-singular, i.e., without zero-entries on the diagonal. Hence, (i) is equivalent to the existence of an orthonormal
basis $\{e_k\}_{k=1}^N$ of $\R^N$ and values $d_1,\ldots,d_N\in\R\setminus\{0\}$ satisfying $\sum_{k=1}^Nd_k = 0$ and
\[
\sum_{k=1}^{N} d_k \l \vphi_j,e_k\r^2 > 0\quad\mbox{for all } j = 1,\ldots,M.
\]
By a permutation of $\{1,\ldots,N\}$ we can achieve that $d_N > 0$. Hence, by setting $a_k := -d_k/d_N$ for $k=1,\ldots,N-1$,
we see that (i) holds if and only if there exist an orthonormal basis $\{e_k\}_{k=1}^N$ of $\R^N$ and
$a_1,\ldots,a_{N-1}\in\R\setminus\{0\}$ such that $\sum_{k=1}^{N-1}a_k = 1$ and
\[
\sum_{k=1}^{N-1} a_k \l \vphi_j,e_k\r^2 < \l\vphi_j,e_N\r^2\quad\mbox{for all } j = 1,\ldots,M.
\]
But this is equivalent to (ii).
\end{proof}

By $\calC_N^*$ we denote the subclass of $\calC_N$ consisting of all zero-trace conical quadrics in which the orthonormal
basis is the standard basis of $\R^N$. That is, the elements of $\calC_N^*$ are quadrics of the form \eqref{e:quadric}
with non-zero $a_k$'s satisfying $\sum_{k=1}^{N-1}a_k = 1$. The next corollary is an immediate consequence of Theorem
\ref{t:geometry} and Corollary \ref{c:unitary}.

\begin{cor}\label{c:geometry}
Let $\Phi\subset\R^N\setminus\{0\}$ be a frame for $\R^N$. Then the following conditions are equivalent.
\begin{itemize}
\item[{\rm (i)}]    $\Phi$ is not scalable.
\item[{\rm (ii)}]   There exists an orthogonal matrix $U\in\R^{N\times N}$ such that all vectors of $U\Phi$ are contained in the
interior of a conical zero-trace quadric of $\calC_N^*$.
\item[{\rm (iii)}]  There exists an orthogonal matrix $U\in\R^{N\times N}$ such that all vectors of $U\Phi$ are contained in the
exterior of a conical zero-trace quadric of $\calC_N^*$.
\end{itemize}
\end{cor}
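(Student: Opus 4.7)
The plan is to derive Corollary \ref{c:geometry} from Theorem \ref{t:geometry} together with the unitary invariance of scalability established in Corollary \ref{c:unitary}. The key observation is that an arbitrary conical zero-trace quadric in $\calC_N$ is just an orthogonal rotation of a quadric in $\calC_N^*$: if $Q\in\calC_N$ is defined by an orthonormal basis $\{e_k\}_{k=1}^N$ and coefficients $(a_k)_{k=1}^{N-1}$, and $V\in\R^{N\times N}$ denotes the orthogonal matrix whose columns are $e_1,\ldots,e_N$, then $\lk x,e_k\rk = (V^Tx)_k$ for each $k$. Consequently, $x\in Q$ if and only if $V^Tx\in Q^*$, where $Q^*\in\calC_N^*$ is built from the standard basis and the same coefficients $(a_k)$. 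The same equivalence carries over verbatim to the interior and the exterior of these quadrics.

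With this preparation I would establish (i)$\Leftrightarrow$(ii) as follows. If $\Phi$ is not scalable, then Theorem \ref{t:geometry} supplies some $Q\in\calC_N$ whose interior contains every frame vector of $\Phi$. Applying $V^T$ as above maps the interior of $Q$ onto the interior of the corresponding $Q^*\in\calC_N^*$, so all vectors of the frame $V^T\Phi$ lie in the interior of $Q^*$; setting $U:=V^T$ then yields (ii). Conversely, if (ii) holds with some orthogonal $U$ and $Q^*\in\calC_N^*$, then $Q^*\in\calC_N^*\subset\calC_N$, so Theorem \ref{t:geometry} forces $U\Phi$ to be non-scalable, and Corollary \ref{c:unitary} transfers this non-scalability back to $\Phi=U^T(U\Phi)$.

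The equivalence (i)$\Leftrightarrow$(iii) is established along exactly the same lines after replacing \emph{interior} by \emph{exterior} throughout; nothing in the transformation argument distinguishes the two sides of the quadric. I do not expect any substantive obstacle, since the corollary is essentially a reformulation of Theorem \ref{t:geometry} in a normalized coordinate system. The only point demanding care is consistent bookkeeping of whether $U$ or its transpose $U^T$ executes the change of basis, because the definition of $\calC_N$ uses $\{e_k\}$ on the \emph{codomain} side of the quadratic form while $\calC_N^*$ fixes the standard basis on the \emph{domain} side of the frame vectors.
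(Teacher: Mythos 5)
Your proposal is correct and follows exactly the route the paper intends: the paper gives no explicit proof but states that the corollary is an immediate consequence of Theorem \ref{t:geometry} and Corollary \ref{c:unitary}, which are precisely the two ingredients you combine, together with the (correct) observation that applying $V^T$ carries a quadric of $\calC_N$ with basis $\{e_k\}$ onto the corresponding quadric of $\calC_N^*$, preserving interior and exterior. Your bookkeeping of $U=V^T$ versus $V$ is also consistent, so there is nothing to add.
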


Utilizing Example \ref{ex:N23}, we can draw the following conclusion from Theorem \ref{t:geometry} for the
cases $N=2, 3$.

\begin{cor}
\bitem
\item[{\rm (i)}] A frame $\Phi\subset\R^2\setminus\{0\}$ for $\R^2$ is not scalable if and only if there exists an open quadrant cone
which contains all frame vectors of $\Phi$.
\item[{\rm (ii)}] A frame $\Phi\subset\R^3\setminus\{0\}$ for $\R^3$ is not scalable if and only if all frame vectors of $\Phi$ are
contained in the interior of an elliptical conical surface with vertex $0$ and intersecting the corners of a rotated unit cube.
\eitem
\end{cor}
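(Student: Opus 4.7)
The plan is to derive both parts as direct specializations of Theorem \ref{t:geometry} using the concrete descriptions of $\calC_2$ and $\calC_3$ worked out in Example \ref{ex:N23}. Theorem \ref{t:geometry} already states that non-scalability of $\Phi$ is equivalent to the existence of a conical zero-trace quadric in $\calC_N$ whose interior contains all frame vectors; the task is simply to unfold this condition in the two low-dimensional cases.

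For part (i), I would invoke Example \ref{ex:N23}(i), which identifies the elements of $\calC_2$ with unions of pairs of orthogonal lines through the origin, i.e., with the boundary surfaces of quadrant cones in $\R^2$. The associated interior, defined by the strict inequality $\l x,e_1\r^2 < \l x,e_2\r^2$ in an appropriate orthonormal basis (the coefficient $a_1 = 1$ being forced by the normalization $\sum_{k=1}^{N-1} a_k = 1$), is then exactly an open quadrant cone in the sense of the paper. Plugging this into Theorem \ref{t:geometry}(ii) yields (i).

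For part (ii), the same strategy applies. By Example \ref{ex:N23}(ii), each element of $\calC_3$ is an elliptical conical surface with vertex at $0$, characterized geometrically by passing through the corners of a suitably rotated unit cube. Its abstract interior $a_1 \l x,e_1\r^2 + a_2 \l x,e_2\r^2 < \l x,e_3\r^2$ with $a_1, a_2 > 0$, $a_1+a_2 = 1$, coincides with the usual geometric interior of the elliptical cone, since the defining quadratic form has signature $(2,1)$. Reading Theorem \ref{t:geometry}(ii) under this identification gives (ii). There is no substantive obstacle here; the only point meriting a brief verification---and the closest thing to a technicality---is this identification of the abstract "interior" from Subsection \ref{subsec:geo} with the geometric interior of the elliptical cone in $\R^3$, which follows from the signature computation just noted.
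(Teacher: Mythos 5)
Your proposal is correct and follows exactly the route the paper intends: the paper states this corollary without proof as an immediate consequence of Theorem \ref{t:geometry} combined with the descriptions of $\calC_2$ and $\calC_3$ in Example \ref{ex:N23}, which is precisely what you unfold. Your added remark identifying the abstract ``interior'' with the geometric interior of the elliptical cone (via the signature $(2,1)$ of the normalized form with $a\in(0,1)$) is a worthwhile clarification of a point the paper leaves implicit.
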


To illustrate the geometric characterization, Figure \ref{fig:1} shows sample regions of vectors of a non-scalable frame in $\R^2$
and $\R^3$.

\begin{figure}[ht]\label{fig:1}
%\includegraphics[width=3cm]{Scalabledim2.eps}
%\hspace*{0.5cm}
%\includegraphics[width=4.5cm]{Cone.eps}
%\hspace*{0.5cm}
%\includegraphics[width=3.5cm]{ConeCirc.eps}
%\put(-320,-15){(a)}
%\put(-185,-15){(b)}
%\put(-55,-15){(c)}
%\caption{(a) shows a sample region of vectors of a non-scalable frame in $\R^2$. (b) and (c)
%show examples of $\calC^-_3$ and $\calC^+_3$ which determine sample regions in $\R^3$.}
\hspace*{1cm}
\includegraphics[width=3cm]{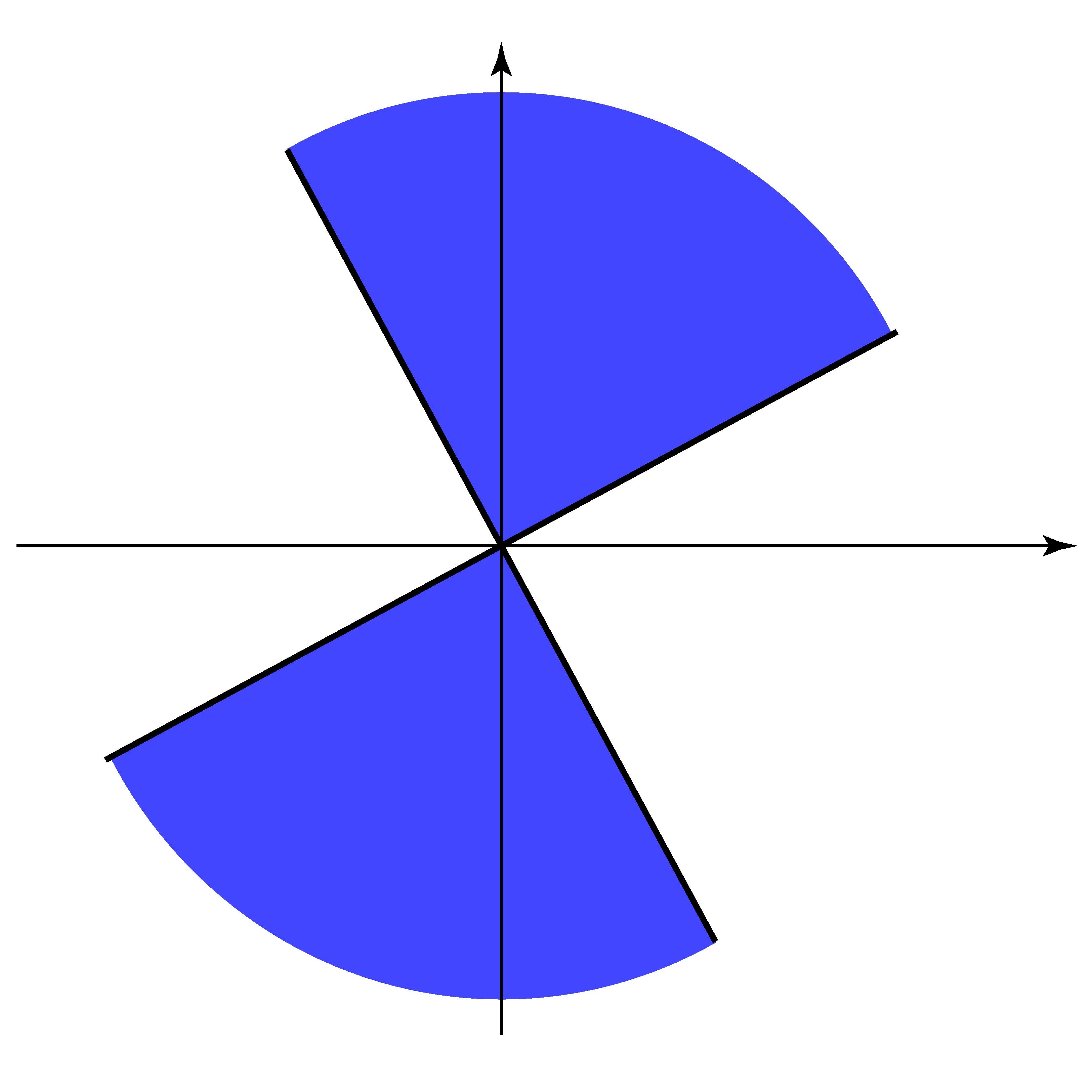}
\hspace*{0.5cm}
\includegraphics[width=4.5cm]{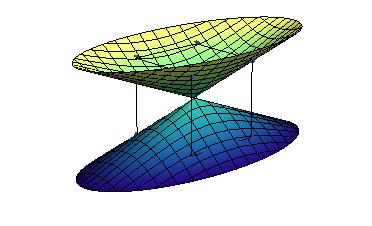}
%\hspace*{0.5cm}
\includegraphics[width=3.25cm]{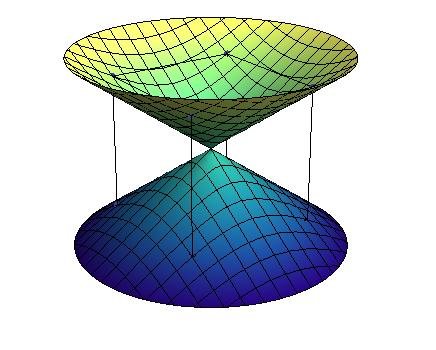}
\put(-300,-15){(a)}
\put(-165,-15){(b)}
\put(-55,-15){(c)}
\caption{(a) shows a sample region of vectors of a non-scalable frame in $\R^2$. (b) and (c)
show examples of sets in $\calC_3$ which determine sample regions in $\R^3$.}
\end{figure}

%*****************************************************************************************************************
%*****************************************************************************************************************
%*****************************************************************************************************************

\section{Acknowledgements}

G.~Kutyniok acknowledges support by the Einstein Foundation Berlin, by Deutsche For\-schungsgemeinschaft
(DFG) Grant SPP-1324 KU 1446/13 and DFG Grant KU 1446/14, and by the DFG Research Center {\sc Matheon}
``Mathematics for key technologies'' in Berlin. F.~Philipp is supported by the DFG Research Center {\sc Matheon}.
K.~A.~Okoudjou  was supported by ONR grants N000140910324 and N0001\-40910144, by a RASA from the Graduate School of
UMCP and by the Alexander von Humboldt foundation. He would also like to express his gratitude to the Institute for
Mathematics at the University of Osnabr\"uck for its hospitality while part of this work was completed.

%*****************************************************************************************************************
%*****************************************************************************************************************
%*****************************************************************************************************************

%\bibliographystyle{elsarticle-harv}
%\bibliography{<your-bib-database>}

\section*{Contact information}
Gitta Kutyniok: Technische Universit\"at Berlin, Institut f\"ur Mathematik, Stra\ss e des 17.\ Juni 136, 10623 Berlin, Germany, kutyniok@math.tu-berlin.de

\vspace{0.4cm}\noindent
Kasso A.\ Okoudjou, Department of Mathematics, University of Maryland, College Park, MD 20742 USA, kasso@math.umd.edu

\vspace{0.4cm}\noindent
Friedrich Philipp: Technische Universit\"at Berlin, Institut f\"ur Mathematik, Stra\ss e des 17.\ Juni 136, 10623 Berlin, Germany, philipp@math.tu-berlin.de

\vspace{0.4cm}\noindent
Elizabeth K.\ Tuley, Department of Mathematics, University of California at Los Angeles, Los Angeles, CA 90095 USA, ektuley@umd.edu
\end{document}